\newtheorem{definition}{Definition}
\newtheorem{proof}{Proof}
\newtheorem{proposition}{Proposition}
\newtheorem{theorem}{Theorem}
\newtheorem{remark}{Remark}
\newcommand{\halfquad}{\mspace{9mu}}
\newcommand{\sixteenquad}{\mspace{216mu}}
\definecolor{purple}{rgb}{0.6, 0.2, 0.8}
\definecolor{darkred}{rgb}{0.545, 0.0, 0.0}
\definecolor{darkgreen}{rgb}{0.0, 0.392, 0.0}
\definecolor{lightblue}{rgb}{0.68, 0.85, 0.9}
\title{Reliability-Aware Control of Distributed Energy Resources using Multi-Source Data Models}
\author{Gejia Zhang, Robert Mieth}
\begin{document}

\bstctlcite{IEEE:BSTcontrol} 

\maketitle

\begin{abstract}
    Distributed energy resources offer a control-based option to improve distribution system reliability by ensuring system states that positively impact component failure rates.
    This option is an attractive complement to otherwise costly and lengthy physical infrastructure upgrades.
    However, required models that adequately map operational decisions and environmental conditions to system failure risk are lacking because of data unavailability and the fact that distribution system failures remain rare events.
    This paper addresses this gap and proposes a multi-source data model that consistently maps comprehensive weather and system state information to component failure rates.
    To manage collinearity in the available features, we propose two ensemble tree-based models that systematically identify the most influential features and reduce the dataset's dimensionality based on each feature's impact on failure rate estimates. 
    These estimates are embedded within a sequential, non-convex optimization procedure, that dynamically updates operational control decisions.
    We perform a numerical experiment to demonstrate the cost and reliability benefits that can be achieved through this reliability-aware control approach and to analyze the properties of each proposed estimation model.
\end{abstract}

\section{Introduction}

Rising electricity demand, driven by population growth and electrification initiatives, has intensified concerns about the adequacy of electric power distribution systems \cite{lee2020does, emmanuel2017evolution}, and their reliability in the context of their environmental conditions \cite{panteli2015influence, hughes2021damage, luca2022reliability, hani2023weather}. 
This includes weather-related wear, such as damage from precipitation, but also safe loading limits depending on ambient temperature, thermal radiation, and wind conditions \cite{farzaneh2005statistical, tilman2012probabilistic, luo2024resilience, glaum2023leveraging}. 
Also, vegetation growth near distribution lines contributes to distribution outages, with adverse weather further increasing outage likelihood and severity \cite{doostan2020datadriven, jumbo2022resource}. 
Finally, weather patterns also drive consumption patterns and directly impact operating constraints which, in turn, define the system's safe operating space \cite{sukprasert2024implications, moein2016optimal}. In addition, even at fixed weather, the operating point itself alters component failure risk, since currents and voltages contribute to component stress. Reliability is therefore decision-dependent \cite{richard_epdr_3}.
Controllable distributed energy resources (DERs) offer a control-based option to positively contribute to distribution system reliability by ensuring system states that correspond to lower failure risks \cite{zhang2025towards}. 
While this option is a cost-effective complement to physical infrastructure upgrades, models to precisely quantify failure risk based on weather context \textit{and} operational decisions are lacking. This paper addresses this~gap.

Linking distribution system component reliability with the covariates that we consider is hindered by a spatial mismatch on the meteorological side,
because publicly available meteorological datasets typically provide data on a fixed spatial grid across broad regions that rarely coincide with the substation service area of interest \cite{copernicus}.
For example, the Copernicus reanalysis at $0.25^{\circ}$ resolution, provides model fields on grid cells of about $215.06\,\text{mi}^2$ at mid-latitudes (roughly $17.26\,\text{mi}$ north to south and $12.46\,\text{mi}$ east to west) \cite{copernicus}. This is a much coarser resolution than the typical extend of around $25\,\text{mi}^2$ of distribution feeders in a substation service area \cite{tom_epdh_1}. 
Weather drivers of distribution component failures, such as wind gusts, wind direction, and precipitation, vary over short distances, so a single nearest grid cell is a noisy surrogate \cite{van2015temporal, jonassen2012multi, wang2023causes, tervo2020predicting}. At these scales, it is therefore necessary to combine weather variables from multiple nearby grid points to form a standard local approximation at the substation rather than rely on a single weather grid point.

Existing literature that integrates weather conditions into power system reliability assessments typically emphasizes extreme events capable of causing catastrophic failures \cite{kim2018enhancing, wang2017robust, jufri2019state} and focuses predominantly on network reconfiguration planning measures or infrastructure hardening \cite{ma2016resilience, he2018robust, li2023distributionally, li2023robust, lin2018tri}. Multiple studies cast extreme event robustness as tri-level optimization model with a min-max-min structure. The upper level selects line hardening \cite{ma2016resilience, he2018robust, lin2018tri, li2023distributionally} and DER allocation as well as proactive reconfiguration \cite{li2023distributionally}. The middle level exposes the system to worst case disruptions, including extreme weather induced outages \cite{ma2016resilience, li2023distributionally}, natural disaster \cite{he2018robust}, adversarial attacks \cite{lin2018tri}, and low probability high damage contingencies in a multi energy setting \cite{li2023robust}. The lower level then minimize load shedding and operating cost. 

In contrast to these planning and resilience-focused models, few studies explicitly incorporate distribution system reliability into operational decision-making by linking component failure rates to control setpoints for DERs. This gap partly stems from the modeling and computational challenges of embedding weather-dependent reliability within optimization frameworks \cite{zhang2025towards}. 
To date, progress toward closing this gap remains limited. Our own previous work in  \cite{zhang2025towards} incorporates distribution system reliability into operational decision-making by co-optimizing operating cost and expected energy-not-served (EENS), but relies solely on temperature as the meteorological input. This limitation motivates our study to move beyond single-variable inputs and to incorporate richer meteorological information when predicting component failures. 

From a statistical modeling perspective, moving beyond temperature-only inputs to include meteorological variables from several nearby grid points places the task in the realm of multi-source data fusion, with each measurement site treated as a distinct information source \cite{yifan2024multi, xie2024enhancing}. 
Unstructured pooling of all sites into a single feature set yields strong within-site and cross-site correlations. The multicollinearity is structural and persists whenever all sources are used. Our goal is to develop prediction models that use the data efficiently, and are not susceptible to multicollinearity and demonstrate fast computational speed for multi-sourced multi-featured data. 

In our problem setup, we know the coordinates of each weather source and the full set of variables reported at each source. It is therefore advantageous to encode this prior knowledge by assigning soft weights to sites and to individual variables that reflect their expected relevance. 
In generalized linear models, multicollinearity manifests as an ill-conditioned information matrix and inflated standard errors, and in logistic regression it limits expressiveness to linear relations in the log-odds \cite{mason1991collinearity, obrien2007caution, king2001logistic, gelman_daurmhm_3, gelman_daurmhm_5}. Methods such as support vector machines (SVM) and neural networks can represent non-linear relations, but encoding grouped sources and prior weights requires careful feature scaling \cite{rakotomamonjy2008simplemkl, goodfellow_dp, trevor_teosl_11, trevor_teosl_12}. 

From a modeling perspective, tree-based methods address several of these issues. A tree-based model (e.g. CART style tree) chooses one predictor and threshold at each split, so each decision depends on the order statistics of a single variable, which is comparatively robust to collinear inputs. 
Tree ensembles also capture nonlinearities, and they allow weighted sampling over data sources and variables so that prior knowledge can be used without requiring exact weights \cite{breiman_cart_2, breiman1996bagging, leo2001randomforest, dumitrescu2022machine}. 
Furthermore, in practice, tree-based ensemble methods such as random forests are often faster to train than non-linear kernel SVM or neural networks. 
This is because tree induction scales more favorably and avoids expensive kernel computations and gradient-based optimization \cite{goodfellow_dp, trevor_teosl_15, trevor_teosl_16, xu2010simple}. 

Empirically, many studies report that weighting features by predictive strength improves tree ensemble performance \cite{zhang2024multi, gyeong2024double, liu2017variable, nguyen2015unbiased, maudes2012random}. Theoretical support, however, is more limited. The first non-asymptotic mean squared error (MSE) decomposition for random forests (RF) with adaptation to sparsity appears in \cite{biau2012analysis}. That decomposition is stated for an arbitrary fixed feature sampling distribution and does not establish whether reweighting improves performance relative to random sampling. 
Subsequent work provides finite sample bounds for the probability of splitting on strong variables and for single tree strength, but not a forest-level MSE bound \cite{borup2023targeting}. 
To the best of our knowledge, the analysis in this paper is the first to insert an explicit weight vector and to prove, in a non-asymptotic manner and for every sample size, that targeted sampling tightens the MSE bound relative to random sampling. 

From the data-level perspective, a complementary remedy of multicollinearity is principal component analysis (PCA), which replaces correlated features with orthogonal components and stabilizes estimation, at the cost of reduced interpretability \cite{yifan2024leastangle, yifan2024true, tianhui2024dynamic}. Sparse PCA improves interpretability via many zero loadings yet is unsupervised and prioritizes explained variance over association with the target \cite{wang2024sequentially, xie2024enhancing}. Because our features are naturally grouped by measurement site and by physical variable class, we adopt group-wise PCA within a boosting framework as secondary compression while preserving meaning at the group level, and we feed the resulting factors to the weighted tree base learner. 

In this work we co-optimize operating cost and risk from component failure measured by EENS, with failure rates predicted from data of an arbitrary number of nearby weather sources and feature sets. 
To address structural multicollinearity in this setting, we adopt a problem informed design that uses soft weights on sources and variables to reflect expected relevance and that favors algorithms with low tuning and runtime burden. 
Because no single predictive model is uniformly best for multi-source, multi-dimensional data with structural multicollinearity \cite{fernandez2014we, wolpert1997no}, we instantiate two predictors tailored to this design: A weighted multi-source decision tree ensemble and a boosting scheme that applies prior guided grouping of features with PCA within groups to reduce collinearity while preserving group level interpretability. Finally, we embed the resulting failure probabilities in a sequential optimization problem that iteratively updates component reliability estimates and DER control setpoints, thereby operationalizing weather- and decision-dependent reliability in a high dimensional non-convex setting.

\section{Tree-based Ensemble Model}
\label{sec:tree_ensemble}

Our goal is to learn a mapping from weather and power flow features to system component failure probability that $(i)$~accounts for information across measurement sites and $(ii)$~remains stable in the presence of cross-feature correlations that can cause multicollinearity and overfitting. Section \ref{subsubsec:weighted_feature_prop} presents a formal discussion on the benefits of statistically embedding prior knowledge about the features into the ensemble model. Sections \ref{subsubsec:ensemble_model_wmsdt} and \ref{subsubsec:ensemble_model_tsmsb} then introduce two suitable ensemble models.

We assume that standardized weather data from $D$ measurement sites, each providing $W$ features, is available. Standardization is performed separately for each feature at each measurement site by subtracting each feature's sample mean and dividing by its sample standard deviation, resulting in data with zero mean and unit variance. All sites share the same $W$ features, which we organize into $M$ categories based on domain knowledge. For instance, features with similar physical or measurement attributes, such as temperature-related variables, are grouped together. Let
\begin{equation*}
    \mathcal{D} = \{1, \cdots, D\}, \quad \mathcal{W} = \{ 1, \cdots, W \}, \quad \mathcal{M} = \{ 1, \cdots, M \}.
\end{equation*}
Each feature belongs to exactly one category and the subsets $\mathcal{W}_m \subseteq \mathcal{W}$ (for $m \in \mathcal{M}$) form a partition of $\mathcal{W}$. In addition to weather features, we also include power flow-based information. Define 
$\mathcal{O}^{\text{b}}$ and $\mathcal{O}^{\text{l}}$ as the sets of features corresponding to net power load for buses and current magnitudes for lines, respectively. Let $\Omega$ be the sample space of all possible observations and for each site $d \in \mathcal{D}$, category $m \in \mathcal{M}$, feature $w \in \mathcal{W}_m$, and $o \in \mathcal{O}^{\text{b}} \cup \mathcal{O}^{\text{l}}$ define the real-valued random variable (r.v.)
\begin{equation*}
    X_{d, m, w, o}: \Omega \rightarrow \mathbb{R}.
\end{equation*}
For the operational status of buses, we collect $n_{\text{b}}$ i.i.d. observations $\left\{ \omega_i \right\}_{i = 1}^{n_{\text{b}}} \subseteq \Omega$. The realized dataset is then
\begin{align*}
    \big\{ x_{i, d, m, w, o} =  X_{d, m, w, o}\left( \omega_i \right) & \mid i = 1, \cdots, n_{\text{b}}, \\
    d & \in \mathcal{D}, m \in \mathcal{M}, w \in \mathcal{W}_{m}, o \in \mathcal{O}^{\text{b}} \big\}.
\end{align*}
Similarly for the operational status of lines, we collect $n_{\text{l}}$ i.i.d. observations $\left\{ \omega_i \right\}_{i = 1}^{n_{\text{l}}} \subseteq \Omega$. The realized dataset is then
\begin{align*}
    \big\{ x_{i, d, m, w, o} =  X_{d, m, w, o}\left( \omega_i \right) & \mid i = 1, \cdots, n_{\text{l}}, \\
    d & \in\mathcal{D}, m \in \mathcal{M}, w \in \mathcal{W}_{m}, o \in \mathcal{O}^{\text{l}} \big\}.
\end{align*}
For simplicity and \textit{w.l.o.g.}, we will no longer distinguish bus and line observations. We represent the realized dataset for $n$ i.i.d. observations as
\begin{equation*}
    \big\{ x_{i, d, m, w} \!\! = \! \! X_{d, m, w} \! \left( \omega_i \right) \! \mid \! i \! = \! 1, \cdots, n, d \! \in \! \mathcal{D}, m \! \in \! \mathcal{M}, w \! \in \! \mathcal{W}_{m} \! \big\},
\end{equation*}
where we use the simplified r.v. definition
\begin{equation*}
    X_{d, m, w}: \Omega \rightarrow \mathbb{R},
\end{equation*}
which will be applied in the subsequent analysis. In addition to these features, we define binary response variable $Y: \Omega \rightarrow \{-1, 1\}$ and for each observation $\omega_i$, the realized response is 
\begin{equation*}
    \left\{ y_i = Y(\omega_i) \mid i = 1, \cdots, n\right\}.
\end{equation*}
\textit{W.l.o.g.} we assume that a class label of $-1$ indicates a failure, whereas a class label of $1$ indicates normal operation. 
\begin{remark}
    Each observation is a single time-stamped record containing all standardized weather features across sites, the concurrent power flow information, and the corresponding operational status. We assume the set of records used for estimation is i.i.d.. Observations do not influence one another and all are drawn from the same distribution. This assumption applies across samples, not within a record. Features measured at the same time may be correlated without violating i.i.d.. To support this assumption in practice, we assemble the dataset, for example, by using non-overlapping time windows or event-level sampling. 
\end{remark} 

\subsubsection{Weighted feature selection} \label{subsubsec:weighted_feature_prop}
We begin our discussion by briefly introducing the theorem presented in \cite{biau2012analysis}. Throughout the discussion, we consider the covariate vector as a random variable
\begin{equation*}
    \mathbf{X}_{\rm{all}}: \Omega \rightarrow [0, 1]^{\kappa},\ \kappa \geq 2,
\end{equation*}
which is uniformly distributed on the $\kappa$-dimensional unit cube, and a square integrable response $Y \in \mathbb{R}$. Define a target function $\rm{tf}: [0 ,1]^{\kappa} \rightarrow \mathbb{R}$ as
\begin{equation*}
    \rm{tf}(\mathbf{x_{\rm{all}}}) = \mathbb{E}[Y | \mathbf{X}_{\rm{all}} = \mathbf{x}_{\rm{all}}], \quad \mathbf{x}_{\rm{all}} \in [0, 1]^{\kappa}.
\end{equation*}
It assigns to every covariate vector $\mathbf{x}_{\rm{all}}$ the ideal prediction, the mean value of $Y$ we would expect if the explanatory variables were fixed at $\mathbf{x}_{\rm{all}}$. Practically, we never observe $\rm{tf}$ itself. Instead, we approximate it using a statistical model constructed from data. In this section, we assume that each individual model is a single decision tree trained to approximate the unknown target function. Given a training sample $\mathcal{DS}_n$ of size $n$ and a randomization variable $\Lambda$ that specifies how a tree is grown, denote by $\hat{\rm{t}}\rm{f}_n(\mathbf{x}_{\rm{all}}, \Lambda, \mathcal{DS}_n)$, the prediction of a single tree at the covariate value $\mathbf{x}_{\rm{all}}$. The RF estimator is the conditional expectation
\begin{equation*}
    \overline{\rm{tf}}_n(\mathbf{X}_{\rm{all}}) = \mathbb{E}_{\Lambda}[\hat{\rm{t}}\rm{f}(\mathbf{X}_{\rm{all}}, \Lambda, \mathcal{DS}_n) | \mathcal{DS}_n],
\end{equation*}
interpreted as the average of infinitely many independent trees built from the same data set but with independent realizations of $\Lambda$. Let $\Sigma \subseteq \{1, \cdots, \kappa\}$ be the strong features index set, with cardinality $\varsigma > 1$. By definition, once the features in $\Sigma$ are fixed, the remaining $\kappa - \varsigma$ features carry no information about $Y$. Formally,
\begin{equation*}
    \rm{tf}(\mathbf{x_{\rm{all}}}) = \rm{tf}^{\star}(\mathbf{x}_{\Sigma}); \, \text{a.s}, \, \text{where } \, \rm{tf}^{\star}: [0, 1]^{\varsigma} \rightarrow \mathbb{R}, 
\end{equation*}
and $\rm{tf}^*(\mathbf{x}_{\Sigma})$ is the function $\rm{tf}$ restricted to the subspace of strong features only. During the single tree construction, the randomized tree growing procedure chooses, at every split node, a feature $j \in \{1, \cdots, \kappa\}$ with probability $\pi_{n, j}^{\Sigma}$. If the strong set $\Sigma$ is known, these probabilities satisfy
\begin{equation}
    \pi_{n, j}^{\Sigma} = \begin{cases}
        1/\varsigma , & \text{if } j \in \Sigma \\
        0, & \text{otherwise}.
    \end{cases} \label{eq:coor_perfect_samp_dist}
\end{equation}
In practice, the exact strong feature set $\Sigma$ is generally unknown. 
The randomized tree growing procedure approximates the ideal selection using the training data, resulting in approximate selection probabilities \cite{biau2012analysis}:
\begin{equation}
    \pi_{n, j}^{\Sigma} = \begin{cases}
        \frac{1}{\varsigma} \left( 1 + \xi_{n, j} \right), & \text{if } j \in \Sigma \\
        \xi_{n, j}, & \text{otherwise}.
    \end{cases} \label{eq:coor_gen_samp_dist}
\end{equation}
Throughout the following discussion we assume that the data-driven weights are consistent, in the sense that $\xi_{n,j} \rightarrow 0$ as $n \rightarrow \infty$. The quantity $\xi_{n,j}$ measures how far the actual probability $\pi_{n, j}^{\Sigma}$ is from the ideal value $1/\varsigma$. Intuitively, a larger training sample provides stronger evidence about which features are truly informative. During the construction of the decision tree, each split feature is selected independently according to the probability distribution $\{ \pi_{n, j}^{\Sigma} \}_{j = 1}^{\kappa}$, regardless of the current depth of the tree or previously selected split directions.
\begin{theorem} \label{thm:rf_mse_bound}
    \cite{biau2012analysis} $\mathbf{X}_{\rm{all}}$ is uniformly distributed on $[0, 1]^{\kappa}$, $\rm{tf}^{\star}(\mathbf{x}_{\varsigma})$ is L-Lipschitz on $[0, 1]^{\varsigma}$ and, for all $\bm{x}_{\rm{all}} \in [0, 1]^{\kappa}$, the variance of $Y$ given $\mathbf{X}_{\rm{all}}$ is bounded by some positive constant $\sigma^2$ such that $\rm{var}[Y|\mathbf{X}_{\rm{all}} = \mathbf{x}_{\rm{all}}] \leq \sigma^2, \forall \mathbf{x}_{\rm{all}} \in [0, 1]^{\kappa}$. Then, if $\pi_{n, j}^{\Sigma} = (1/\varsigma)(1 + \xi_{n, j})$ for $j \in \Sigma$, letting $\varpi_{n} = \min_{j \in \varsigma}\xi_{n, j}$, we have the MSE
    \begin{equation} \label{eq:mse_total}
        \mathbb{E}[\overline{\rm{tf}}_n(\mathbf{X}_{\rm{all}}) - \rm{tf}(\mathbf{X}_{\rm{all}})]^2 \leq \Xi_n \frac{|\mathcal{I}_{\rm{ter}, n}|}{n} + \frac{2 \varsigma L^2}{|\mathcal{I}_{\rm{ter,n}}|^{\frac{0.75}{\varsigma \log 2}(1 + \varpi_n)}},
    \end{equation}
    where
    \begin{align} \label{eq:mse_variance_total}
        \Xi_n & = \frac{288}{\pi}\left(\frac{\pi \log 2}{16}\right)^{|\varsigma|/2\kappa} \! \! \! \! \sigma^2\left(\frac{|\varsigma|^2}{|\varsigma| - 1}\right)^{|\varsigma|/2\kappa} \! \! \! \!(1 + \xi_n) \notag \\
        & + 2e^{-1}\left[\sup_{\bm{x}_{\rm{all}}\in[0, 1]^{\kappa}} \rm{tf}^2(\mathbf{x}_{\rm{all}})\right]
    \end{align}
    and
    \begin{equation} \label{eq:mse_variance_component}
        \xi_{n} = \prod_{j \in \varsigma}\left[ (1 + \xi_{n,j})^{-1}\left( 1 - \frac{\xi_{n,j}}{\varsigma - 1}\right)^{-1} \right]^{1/2\kappa} - 1.
    \end{equation}
    Here $\mathcal{I}_{\rm{ter,n}}$ denotes the leaf nodes and $L$ is the L-Lipschitz constant. The sequence $(\xi_{n})$ depends on the sequences $\{ (\xi_{n, j}): j \in \varsigma \}$ only and tends to $0$ as $n$ tends to infinity. 
\end{theorem}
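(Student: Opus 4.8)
The plan is to prove Theorem~\ref{thm:rf_mse_bound} through a bias--variance (equivalently, approximation--estimation) decomposition of the forest risk, treating the two resulting terms with entirely different tools. I would first introduce an \emph{idealized} estimator $\tilde{\rm{tf}}_n(\mathbf{X}_{\rm{all}})$, defined exactly like $\overline{\rm{tf}}_n$ but replacing each leaf average of the responses $Y_i$ by the average of the true regression function $\rm{tf}$ over the same leaf $A(\mathbf{X}_{\rm{all}},\Lambda)$ (the leaf of the $\Lambda$-tree containing the query point), then averaged over $\Lambda$. Since the cross term vanishes,
\begin{equation*}
\mathbb{E}[\overline{\rm{tf}}_n - \rm{tf}]^2 = \underbrace{\mathbb{E}[\overline{\rm{tf}}_n - \tilde{\rm{tf}}_n]^2}_{\text{estimation}} + \underbrace{\mathbb{E}[\tilde{\rm{tf}}_n - \rm{tf}]^2}_{\text{approximation}},
\end{equation*}
and these two summands will become the first and second terms of \eqref{eq:mse_total}. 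The structural fact I would exploit throughout is that, since the splits occur at cell midpoints on coordinates drawn from $\{\pi_{n,j}^{\Sigma}\}$ independently of the data, a depth-$k_n$ tree has $|\mathcal{I}_{\rm{ter},n}| = 2^{k_n}$ leaves, so $k_n = \log_2 |\mathcal{I}_{\rm{ter},n}|$, and the number of times coordinate $j$ is split along a root-to-leaf path is $\mathrm{Binomial}(k_n,\pi_{n,j}^{\Sigma})$.

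For the approximation term I would use that $\rm{tf} = \rm{tf}^{\star}$ depends only on the $\varsigma$ strong coordinates and that $\rm{tf}^{\star}$ is $L$-Lipschitz, so the per-leaf bias is at most $L^2$ times the expected squared leaf diameter measured in the strong coordinates, which factorizes as $L^2 \sum_{j \in \Sigma} \mathbb{E}[(\text{edge}_j)^2]$. A midpoint split on coordinate $j$ scales the squared edge by $1/4$, giving $\mathbb{E}[(\text{edge}_j)^2] = (1 - \tfrac34 \pi_{n,j}^{\Sigma})^{k_n}$. Substituting $\pi_{n,j}^{\Sigma} = \tfrac1\varsigma(1+\xi_{n,j})$ from \eqref{eq:coor_gen_samp_dist}, applying $1-x \le e^{-x}$, and rewriting $e^{-c k_n} = |\mathcal{I}_{\rm{ter},n}|^{-c/\log 2}$ yields a per-coordinate bound $|\mathcal{I}_{\rm{ter},n}|^{-\frac{0.75}{\varsigma\log 2}(1+\xi_{n,j})}$. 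Since $|\mathcal{I}_{\rm{ter},n}|>1$, replacing each $\xi_{n,j}$ by the smallest one, $\varpi_n = \min_{j\in\Sigma}\xi_{n,j}$, upper-bounds every term with a uniform exponent; summing over the $\varsigma$ strong coordinates and absorbing the standard factor-two slack in the Lipschitz oscillation bound produces exactly the second term $2\varsigma L^2 / |\mathcal{I}_{\rm{ter},n}|^{\frac{0.75}{\varsigma\log 2}(1+\varpi_n)}$ of \eqref{eq:mse_total}.

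For the estimation term I would write the forest as an average over two independent randomizations $\Lambda,\Lambda'$ and reduce its variance to the expected relative mass of the intersection of the two leaves containing $\mathbf{X}_{\rm{all}}$. With midpoint splits every leaf has volume $2^{-k_n}$, so the ratio $\mu(A\cap A')/(\mu(A)\mu(A'))$ collapses to $\prod_{j} 2^{\min(K_j,K_j')}$, where $(K_j)$ and $(K_j')$ are the independent multinomial split counts of the two trees and $\mu$ denotes Lebesgue measure. The variance is thus governed by $\tfrac{1}{n}\,\mathbb{E}\big[\prod_{j} 2^{\min(K_j,K_j')}\big]$, and the $2^{k_n} = |\mathcal{I}_{\rm{ter},n}|$ leaves supply the $|\mathcal{I}_{\rm{ter},n}|/n$ scaling. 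The contribution of leaves receiving no sample point is controlled separately by $\mathbb{E}[(1-\mu(A))^n] \le e^{-n\mu(A)}$, optimized over the cell mass; this is what produces the additive term $2e^{-1}\sup_{\mathbf{x}_{\rm{all}}}\rm{tf}^2(\mathbf{x}_{\rm{all}})$ inside $\Xi_n$ in \eqref{eq:mse_variance_total}.

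The main obstacle is the exact evaluation of $\mathbb{E}\big[\prod_{j} 2^{\min(K_j,K_j')}\big]$ and its reduction to the closed form in \eqref{eq:mse_variance_total}. I would factor the product over the strong coordinates (the weak ones, with $\pi_{n,j}^{\Sigma}=\xi_{n,j}\to 0$, contribute negligibly), bound each factor $\mathbb{E}[2^{\min(K_j,K_j')}]$ via a Stirling / local-limit approximation of the binomial mass near its mode, and collect the per-coordinate factors. This is precisely where the geometric mean over the $\kappa$ coordinates yields the exponent $\varsigma/2\kappa$ and where the ratio $\varsigma^2/(\varsigma-1)$ together with the absolute constants $288/\pi$ and $(\pi\log 2/16)^{\varsigma/2\kappa}$ appear. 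The most delicate bookkeeping is to show that the weights $\xi_{n,j}$ enter this factor only through the convergent product $\xi_n$ of \eqref{eq:mse_variance_component}, so that the $\xi=0$ case recovers the base constant $(\varsigma^2/(\varsigma-1))^{\varsigma/2\kappa}$ and the weighted case multiplies it by $(1+\xi_n)$ with $\xi_n \to 0$; this separation is exactly what isolates the effect of targeted sampling from the uniform-sampling baseline.
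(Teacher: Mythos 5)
The first thing to note is that the paper does not prove this statement at all: Theorem~\ref{thm:rf_mse_bound} is imported verbatim (up to notation) from \cite{biau2012analysis} and is used purely as a black box in the proof of Proposition~\ref{prop:small_mse_targeted_samp}. So there is no in-paper proof to compare against; the relevant benchmark is Biau's original argument, and your sketch does correctly reconstruct its architecture --- the decomposition against the idealized estimator $\tilde{\rm{tf}}_n$ whose leaves average the true regression function, the binomial split-count model $\mathrm{Binomial}(k_n,\pi_{n,j}^{\Sigma})$ with $k_n=\log_2|\mathcal{I}_{\rm{ter},n}|$, the per-coordinate edge recursion $\mathbb{E}[(\text{edge}_j)^2]=(1-\tfrac34\pi_{n,j}^{\Sigma})^{k_n}$ leading to the exponent $\tfrac{0.75}{\varsigma\log 2}(1+\varpi_n)$, and the reduction of the forest variance to the cross-tree leaf-intersection factor.

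There is, however, a genuine gap: the entire quantitative content of the first term of \eqref{eq:mse_total} --- the constant $\tfrac{288}{\pi}\left(\tfrac{\pi\log 2}{16}\right)^{\varsigma/2\kappa}$, the factor $\left(\tfrac{\varsigma^2}{\varsigma-1}\right)^{\varsigma/2\kappa}$, and above all the specific form of $\xi_n$ in \eqref{eq:mse_variance_component} --- lives inside the evaluation of $\mathbb{E}\bigl[\prod_{j}2^{\min(K_j,K_j')}\bigr]$, and your proposal only names this step (``Stirling / local-limit approximation of the binomial mass near its mode'') without carrying it out. Since Proposition~\ref{prop:small_mse_targeted_samp} in this paper rests entirely on the monotonicity of the expression $(1+\xi_{n,j})^{-1}\bigl(1-\tfrac{\xi_{n,j}}{\varsigma-1}\bigr)^{-1}$ in $\xi_{n,j}$, asserting that the weights ``enter only through the convergent product $\xi_n$'' is precisely the claim that needs a derivation, not a description. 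A second, smaller issue: in a non-asymptotic bound you cannot dismiss the weak coordinates as contributing ``negligibly'' because $\pi_{n,j}^{\Sigma}=\xi_{n,j}\to 0$; the theorem holds for every finite $n$, so the weak-coordinate factors must be bounded explicitly (this is why the final statement stipulates that $\xi_n$ depends on $\{\xi_{n,j}:j\in\Sigma\}$ only --- a fact to be proved, not assumed). As it stands your text is a faithful roadmap of the source proof rather than a proof.
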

We now specify the sampling schemes used in our analysis.
\begin{definition}[Uniform sampling]\label{def:uniform_samp}
    Under \textit{uniform sampling} the feature selection probabilities satisfy
    \begin{equation*}
        \pi_{n, j}^{\Sigma} = 1/\kappa, \forall j \in \{1, \cdots, \kappa\}.
    \end{equation*}
\end{definition}
\begin{definition}[$\bm{\delta}$-targeted sampling]\label{def:targeted_samp}
    Under $\delta$-\textit{targeted sampling} the feature selection probabilities satisfy
    \begin{equation*}
        \pi_{n, j}^{\Sigma} = \begin{cases}
            \delta_j/\kappa, & \text{if } j \in \Sigma \\
            \left( 1 - \sum_{j \in \Sigma} \frac{\delta_{n, j}}{\kappa}\right)/{\left( \kappa - \varsigma \right)}, & \text{otherwise},
        \end{cases}
    \end{equation*}
    where $\sum_{j \in \Sigma} \delta_j \leq \kappa$; $\delta_j \geq 1$, $\forall j \in \Sigma$ and $\exists j \in \Sigma$ such that $\delta_j > 1$.
\end{definition}
\begin{proposition}\label{prop:small_mse_targeted_samp}
    For any sequence of leaf size $|\mathcal{I}_{\rm{ter,n}}|$ satisfying $|\mathcal{I}_{\rm{ter,n}}| \rightarrow \infty$ and $|\mathcal{I}_{\rm{ter,n}}|/n \rightarrow 0$, $\delta$-targeted sampling with factor $1 < \delta_{n,j} < \kappa/2$, $\forall n$ on features $j$, $j \in \Sigma$ yields a strictly smaller non-asymptotic upper bound on MSE than uniform sampling. 
\end{proposition}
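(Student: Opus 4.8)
The plan is to reduce the comparison to a term-by-term analysis of the Biau bound \eqref{eq:mse_total} after re-expressing both sampling schemes in the parametrization \eqref{eq:coor_gen_samp_dist}. First I would match each scheme to its induced weights on the strong set. Writing $\pi_{n,j}^{\Sigma} = (1/\varsigma)(1+\xi_{n,j})$ for $j \in \Sigma$, uniform sampling (Definition \ref{def:uniform_samp}) gives $\pi_{n,j}^{\Sigma} = 1/\kappa$, hence $\xi_{n,j}^{U} = \varsigma/\kappa - 1$, while $\delta$-targeted sampling (Definition \ref{def:targeted_samp}) gives $\pi_{n,j}^{\Sigma} = \delta_{n,j}/\kappa$, hence $\xi_{n,j}^{T} = \varsigma\delta_{n,j}/\kappa - 1$. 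Since $\delta_{n,j} > 1$ for every $j \in \Sigma$ under the hypothesis, we immediately get $\xi_{n,j}^{T} > \xi_{n,j}^{U}$ for all strong features. I would also record that the additive constant $2e^{-1}\sup_{\mathbf{x}_{\rm{all}}} \rm{tf}^2$, the positive prefactor multiplying $(1+\xi_n)$ in \eqref{eq:mse_variance_total}, and the ratio $|\mathcal{I}_{\rm{ter,n}}|/n$ are all identical across the two schemes, so only $\xi_n$ and $\varpi_n$ can drive a difference.

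The crux is to show that raising $\xi_{n,j}$ from $\xi_{n,j}^{U}$ to $\xi_{n,j}^{T}$ strictly decreases the product defining $1 + \xi_n$ in \eqref{eq:mse_variance_component}. I would isolate the per-feature factor $g(u) = \big[(1+u)(1 - u/(\varsigma-1))\big]^{-1}$ and note that its denominator equals, up to the positive constant $1/(\varsigma-1)$, the concave quadratic $h(u) = (1+u)(\varsigma-1-u)$. This quadratic is maximized at $u^{\star} = (\varsigma-2)/2 = \varsigma/2 - 1$ and is strictly increasing for $u < u^{\star}$; hence $g$ is strictly decreasing and positive on $(-1, u^{\star})$. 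The key algebraic step — and where the constraint $\delta_{n,j} < \kappa/2$ earns its place — is the equivalence $\xi_{n,j}^{T} = \varsigma\delta_{n,j}/\kappa - 1 < \varsigma/2 - 1 \iff \delta_{n,j} < \kappa/2$, together with $\xi_{n,j}^{U} = \varsigma/\kappa - 1 < 0 \le u^{\star}$ (using $\varsigma \ge 2$, $\kappa > \varsigma$). Thus both endpoints lie in the strictly decreasing region with $\xi_{n,j}^{U} < \xi_{n,j}^{T} < u^{\star}$, giving $g(\xi_{n,j}^{T}) < g(\xi_{n,j}^{U})$ for every $j \in \Sigma$. Raising to the power $1/2\kappa$ preserves the inequality, so the full product strictly decreases and $1 + \xi_n^{T} < 1 + \xi_n^{U}$.

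Finally I would reassemble \eqref{eq:mse_total}. Because the prefactor of $(1+\xi_n)$ in $\Xi_n$ is a positive constant common to both schemes, $\xi_n^{T} < \xi_n^{U}$ yields $\Xi_n^{T} < \Xi_n^{U}$, and multiplying by the common strictly positive ratio $|\mathcal{I}_{\rm{ter,n}}|/n$ makes the first term strictly smaller under targeted sampling. For the second term, $\varpi_n = \min_{j \in \Sigma}\xi_{n,j}$ satisfies $\varpi_n^{T} = \varsigma\min_j \delta_{n,j}/\kappa - 1 > \varsigma/\kappa - 1 = \varpi_n^{U}$, so the exponent $\tfrac{0.75}{\varsigma\log 2}(1+\varpi_n)$ is strictly larger; since $|\mathcal{I}_{\rm{ter,n}}| \to \infty$ guarantees a base exceeding $1$ for all large $n$, the second term is strictly smaller as well. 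Summing two strictly smaller terms gives a strictly smaller upper bound for every such $n$, which is the claim. The main obstacle I anticipate is purely the monotonicity bookkeeping in the second paragraph: one must verify positivity of $h$ on the relevant interval and confirm that $\delta_{n,j} < \kappa/2$ keeps $\xi_{n,j}^{T}$ strictly left of the vertex $u^{\star}$, since pushing a strong feature's weight past that point would begin to inflate the variance factor $(1 - \xi_{n,j}/(\varsigma-1))^{-1}$ and could reverse the improvement.
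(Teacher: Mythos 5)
Your proposal is correct and follows essentially the same route as the paper's proof: compute $\xi_{n,j}$ under each scheme, observe that the per-feature factor in \eqref{eq:mse_variance_component} is strictly decreasing in $\xi_{n,j}$ up to the vertex $(\varsigma-2)/2$ of the underlying quadratic (which is exactly where the condition $\delta_{n,j}<\kappa/2$ comes from), and then use monotonicity in $\varpi_n$ for the second term. Your factored form $h(u)=(1+u)(\varsigma-1-u)$ makes the positivity and monotonicity checks slightly cleaner than the paper's endpoint evaluations, but the argument is the same.
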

\begin{proof}
    See Appendix \ref{appendix:benefits_weighted_feature_selection}.
\end{proof}
Proposition~\ref{prop:small_mse_targeted_samp} demonstrates that incorporating appropriately guided prior information into the feature selection process results in a strictly lower bound on the MSE compared to random feature selection. 

\subsubsection{Weighted MS decision tree ensemble (WMSDTE)} \label{subsubsec:ensemble_model_wmsdt}

Under the standard RF feature sub-sampling with uniform sampling, each split draws a candidate subset of features of specified size uniformly at random from those available at the node, and the split applied is the one that minimizes impurity within that subset \cite{biau2012analysis, scikitlearn2011python}. This offers the chance to select strong features when they are sampled, but it does not ensure that strong features are considered at every split, especially when the total number of features is large relative to the candidate set size \cite{biau2012analysis}. The $\delta$-targeted sampling increases the probability that at least one strong features appears in each candidate set. In practice, the strong features set is unknown, so we estimate relative importance from data and prior knowledge. We approach this through a two level weighting scheme that exploits the multi-source setting: Site level weights across measurement sources and, within each source, feature weights across variables. These weights define the feature sampling distribution used at each split and provide a feature-level data fusion framework. 

\paragraph{Probabilistic feature subset splitting in binary decision trees} We define the r.v. $\bm{X}: \Omega \rightarrow \mathcal{X}$ by collecting all single-feature r.v.s $X_{d, m, w}$. Formally, for each observation $\omega \in \Omega$, 
\begin{equation*}
    \bm{X}(\omega) = \big( X_{d, m, w}(\omega) \big)_{d \in \mathcal{D}, m \in \mathcal{M}, w \in \mathcal{W}_m},
\end{equation*}
where $\mathcal{X} \subseteq \mathbb{R}^{D \cdot W}$. Thus, each realization $\bm{X}(\omega)$ is an element of $\mathcal{X}$ containing all features from $D$ measurement sites and $M$ categories. Then the realized dataset can also be written as $\left\{ \bm{x}_{i} = \bm{X}(\omega_i) \mid i = 1, \cdots, n \right\}$. Consider the binary classification setting where the class labels are $y_i \in \{-1, 1\}$. We have $n$ i.i.d. samples $\left\{ \left( \bm{x}_i, y_i \right) \right\}_{i = 1}^n$, drawn from an unknown distribution $\Pi$ on $\mathcal{X} \times \{-1, 1\}$, where the combined feature set from all measurement sites $\bm{x}_{i} \in \mathcal{X} \subseteq \mathbb{R}^{D \cdot W}$. We assign a weight distribution $\left\{ \pi_{\omega_i}^{\Omega} \mid i = 1, \cdots, n \right\}$ across the realized dataset, where $\sum_{i = 1}^{n} \pi_{\omega_i}^{\Omega} = 1$. Let $\nu$ denote a node in the decision tree, and let $\mathcal{I}_{\nu} \subseteq \{ 1, \cdots, n \}$ be the set of sample indices that reach node $\nu$. For node $\nu$, let
\begin{equation}
    \text{pr}(\nu) = \frac{\sum_{i \in \mathcal{I}_{\nu}}\pi_{\omega_i}^{\Omega} \bm{1}\left(y_i = 1\right)}{\sum_{i \in \mathcal{I}_{\nu}}\pi_{\omega_i}^{\Omega}} \label{eq:weighted_pr_nu},
\end{equation}
where $\bm{1}\{\diamond\}$ is the indicator function and $\text{pr}(\nu)$ is the weighted proportion of samples in node $\nu$ that belong to class $1$ \cite{breiman_cart_2}. The Gini impurity of node $\nu$ is defined as $GI_{\nu} = 2 \left[\text{pr}(\nu)\right]\left[1 - \text{pr}(\nu)\right]$ \cite{breiman_cart_2}. Unless otherwise indicated, we assume a uniform weight distribution over the samples, i.e., $\pi_{\omega_i}^{\Omega} = 1/n, \forall i$. Then
\begin{equation}
    \text{pr}(\nu) = \frac{|\{i \in \mathcal{I}_{\nu}: y_i = 1\}|}{|\mathcal{I}_{\nu}|} \label{eq:unweighted_pr_nu}
\end{equation}
where $|\diamond|$ denotes the cardinality of the set $\diamond$. For each measurement site $d \in \mathcal{D}$, we assign a strictly positive sampling weight 
\begin{equation*}
    \pi_{d}^D > 0 \quad s.t. \sum_{d \in \mathcal{D}} \pi_d^{D} = 1 \quad \forall d \in \mathcal{D}.
\end{equation*}
Then $\left\{ \pi_{d}^{\mathcal{D}} \right\}_{d \in \mathcal{D}}$ forms a probability distribution over the measurement sites. Similarly $\left\{ \pi_{m}^{\mathcal{M}}\right\}_{m \in \mathcal{M}}$ forms a strictly positive probability distribution over the feature categories. To generate a candidate subset for the best possible split at node $\nu$, we randomly sample a subset of measurement sites $\mathcal{D}_{\nu} \subseteq \mathcal{D}$ without replacement, according to the probability distribution $\left\{ \pi_{d}^{\mathcal{D}} \right\}_{d \in \mathcal{D}}$. Similarly, we sample a subset of feature categories $\mathcal{M}_{\nu} \subseteq \mathcal{M}$ without replacement, according to the probability distribution $\left\{ \pi_{m}^{\mathcal{M}} \right\}_{m \in \mathcal{M}}$. The candidate subset at the node $\nu$ is defined as:
\begin{equation*}
    \mathcal{X}_{\nu} = \bigcup_{d \in \mathcal{D}_{\nu}} \bigcup_{m \in \mathcal{\mathcal{M}_{\nu}}} \mathcal{W}_{m}^{d},
\end{equation*}
where $\mathcal{W}_{m}^{d}$ denotes the set of features belonging to category $m$ at measurement site $d$. The corresponding sub-vector of $\bm{X}(\omega)$, restricted to the indices in $\mathcal{X}_{\nu}$ is given as:
\begin{equation*}
    \bm{X}_{\nu}(\omega) = \big( X_{d, m, w}(\omega) \big)_{d \in \mathcal{D}_{\nu}, m \in \mathcal{M}_{\nu}, w \in \mathcal{W}_m^{d}}.
\end{equation*}
At each node $\nu$, $\varkappa \in \mathcal{X}_{\nu}$ and threshold $\tau \in \mathbb{R}$, define the left (right) children of node $\nu$ as $\nu^{L}$ ($\nu^{R}$) and the set of sample indices that reach node $\nu^{L}$ ($\nu^{R}$) as 
\begin{equation*}
    \mathcal{I}_{\nu}^{L}(\varkappa, \tau) = \left\{ i \in \mathcal{I}_{\nu}: x_{i, \varkappa} < \tau \right\} \halfquad \left( \mathcal{I}_{\nu}^{R}(\varkappa, \tau) = \mathcal{I}_{\nu} \backslash \mathcal{I}_{\nu}^{L}(\varkappa, \tau) \right).
\end{equation*}
We select
\begin{equation}
    \left( \varkappa^*, \tau^* \right) = \arg \min_{\varkappa \in \mathcal{X}_{\nu}, \tau \in \mathbb{R}} Q\left( \mathcal{I}_{\nu}; \varkappa, \tau \right), \label{eq:node_split_decision_criteria}
\end{equation}
where $Q\left( \mathcal{I}_{\nu}; \varkappa, \tau \right)$ is the post-split impurity as \cite{breiman_cart_2}
\begin{equation*}
    \frac{| \mathcal{I}_{\nu}^{L}(\varkappa, \tau) |}{| \mathcal{I}_{\nu} |} GI_{\nu^{L}} + \frac{| \mathcal{I}_{\nu}^{R}(\varkappa, \tau) |}{| \mathcal{I}_{\nu} |} GI_{\nu^{R}}.
\end{equation*}

\paragraph{Sampling weights for measurement sites}
In practical circumstances, it is reasonable to assume that weather data collected from sites closer to the location of interest are more indicative of location conditions than data from distant sites. To capture this effect, we assign larger weights to measurement sites nearer the location of interest and smaller weights to those farther away. We set each site's weight to be inversely proportional to its distance from the location of interest and normalize these weights so that they form a probability distribution over the sites. Specifically, let $\left( \phi_{d}, \lambda_{d} \right)$ denote the coordinates of measurement site $d$, where $\phi_{d}$ is the latitude and $\lambda_d$ is the longitude and $\left( \phi_{d}, \lambda_{d} \right)$ represents the location of the substation. Let $r_d$ be the Euclidean distance from site $d$ to the substation:
\begin{equation*}
    r_d = \sqrt{\left( \phi_d - \phi_0 \right)^2 + \left( \lambda_d - \lambda_0 \right)^2}
\end{equation*}
where latitudes and longitudes are treated as coordinates in a plane. We then assign each site $d$ a strictly positive weight $\pi_d^{\mathcal{D}}$ inversely proportional to $r_d$:
\begin{equation}
    \pi_d^{\mathcal{D}} = \frac{1/r_d}{\sum_{k \in \mathcal{D}}\left( 1/r_k \right)}. \label{eq:site_wise_sample_weight}
\end{equation}

\paragraph{Sampling weights for feature categories} To guide the allocation of category weights, we leverage logistic regression coefficients as a measure of each category's predictive strength; i.e., the magnitude of a coefficient reflects how strongly the category $m \in \mathcal{M}$ correlates with the response variables \cite{kaufman1996comparing}. This interpretation is clearest when cross category correlations are weak \cite{gelman_daurmhm_3, gelman_daurmhm_5}. In practice, multicollinearity remains: We therefore treat these coefficients as heuristic weights used to set a relative ordering of category importance, rather than as precise effect sizes. For each observation $i$ and category $m$, we create a single summary statistic by averaging all features associated with that category across all measurement sites. Specifically,
\begin{equation*}
    z'_{i, m} = \frac{1}{|\mathcal{D}|\cdot |\mathcal{W}_m|}\sum_{d \in \mathcal{D}} \sum_{w \in \mathcal{W}_m} x_{i, d, m, w},
\end{equation*}
for $m \in \mathcal{M},\ i = 1, \cdots, n$ yielding a single predictor $z'_{i, m}$ for each category $m$ in every sample $i$. Using $\left\{ z'_{i, m} \right\}_{m = 1}^{M}$ as predictors, we fit a logistic regression model to predict the binary response $y_i \in \{ -1, 1 \}$:
\begin{equation*}
    \log \left( \frac{\text{pr}_i}{1 - \text{pr}_i} \right) = \alpha_0 + \sum_{m = 1}^{M}\alpha_m z'_{i, m}, 
\end{equation*}
where
\begin{equation*}
    \text{pr}_i = \text{Pr}\left( Y_i = 1 | z'_{i, 1}, \cdots, z'_{i, M} \right).
\end{equation*}
Suppose the fitted coefficients are $\hat{\alpha}_0, \hat{\alpha}_1, \cdots, \hat{\alpha}_M$. The magnitude $|\hat{\alpha}_m|$ for $m = 1, \cdots, M$ reflects the strength of this relationship. We then assign each feature category $m$ a strictly positive weight $\pi_{m}^{\mathcal{M}}$:
\begin{equation*}
    \pi_{m}^{\mathcal{M}} = \frac{|\hat{\alpha}_m|}{\sum_{h = 1}^M |\hat{\alpha}_h|} \quad \text{for } m = 1, \cdots, M.
\end{equation*}

\paragraph{Construction of WMSDTE}
We grow each decision tree by recursively applying the node-splitting procedure described above \cite{breiman_cart_2}. Analogous to the RF approach, we bootstrap $n_h$ samples from the training sample set $\left\{ \left(\bm{x}_i, y_i \right) \mid i = 1, \cdots, n_{\text{train}} \right\}$ for each tree at the start of its construction. This process continues until a predefined maximum depth $H_{\max}$ is reached. To form the final ensemble, we independently construct $J$ such weighted MS decision trees $\left\{ h_1, \cdots, h_{J} \right\}$. For a new feature vector $\bm{x}_{\text{new}}$, each tree $h_t(\bm{x_{\text{new}}})$ makes a binary prediction in $\{-1, 1\}$. We aggregate these predictions by majority vote \cite{breiman_cart_2}:
\begin{equation*}
    \hat{y}(\bm{x}_{\text{new}}) = \arg \max_{y \in \{0, 1\}} \sum_{j = 1}^{J}\bm{1}\left\{ h_j(\bm{x_{\text{new}}}) = y \right\}.
\end{equation*}
The component failure probability can be estimated by 
\begin{equation*}
    \hat{\text{P}}\text{r}(\bm{x}_{\text{new}}) = \frac{1}{J}\sum_{j = 1}^{J} \hat{\text{P}}\text{r}_j(\bm{x}_{\text{new}}),
\end{equation*}
where $\hat{\text{P}}\text{r}_j(\bm{x}_{\text{new}})$ is the leaf-level probability of failure returned by the $j^{th}$ tree. The leaf-level probability is the fraction of training samples in leaf ``$\text{leaf}_l$'' (the same leaf in which $\bm{x}_{\text{new}}$ is located) that belong to the failure class, and is calculated by
\begin{equation*}
    \frac{\sum_{i \in \text{leaf}_l } \bm{1}\left\{ y _i = -1\right\}}{|\left\{ \bm{x}_i: i \in \text{leaf}_l  \right\}|}.
\end{equation*}
WMSDTE is illustrated in Figure \ref{fig:wmsdt} and formalized in Algorithm \ref{alg:wmsdt}.

\begin{figure}
    \centering
    \resizebox{\linewidth}{!}{\begin{tikzpicture}[font=\small, node distance=1cm and 0.5cm, >=Stealth]

\node[draw, rounded corners, fill=blue!20, minimum width=1cm, minimum height=0.5cm, text centered] at (-1,1) (e1ds1) {$\mathcal{D}_{a}$};
\node[draw, rounded corners, fill=blue!30, minimum width=1cm, minimum height=0.5cm, text centered] at (-1, 0.5) (e1ds2) {$\mathcal{D}_{b}$};

\node[draw, rounded corners, fill=blue!30, minimum width=1cm, minimum height=0.5cm, text centered] at (-1, -0.5) (e2ds2) {$\mathcal{D}_{b}$};
\node[draw, rounded corners, fill=blue!30, minimum width=1cm, minimum height=0.5cm, text centered] at (-1, -1) (e2ds3) {$\mathcal{D}_{c}$};

\node[draw, rounded corners, fill=blue!30, minimum width=1cm, minimum height=0.5cm, text centered] at (-1, -2) (e3ds2) {$\mathcal{D}_{b}$};
\node[draw, rounded corners, fill=blue!20, minimum width=1cm, minimum height=0.5cm, text centered] at (-1, -2.5) (e3ds4) {$\mathcal{D}_{d}$};

\node[draw, rounded corners, fill=blue!20, minimum width=1cm, minimum height=0.5cm, text centered] at (-1, -3.5) (e4ds1) {$\mathcal{D}_{a}$};
\node[draw, rounded corners, fill=blue!30, minimum width=1cm, minimum height=0.5cm, text centered] at (-1, -4) (e3ds3) {$\mathcal{D}_{c}$};

\node[draw, fill=cyan!50, minimum width=1cm, minimum height=0.5cm, text centered] at (1, 1) (e1gfh) {$\mathcal{M}_{a}$};
\node[draw, fill=cyan!25, minimum width=1cm, minimum height=0.5cm, text centered] at (1, 0.5) (e1gfhf) {...};
\node[draw, fill=cyan!70, minimum width=1cm, minimum height=0.5cm, text centered] at (2, 1) (e1gfw) {$\mathcal{M}_{b}$};
\node[draw, fill=cyan!35, minimum width=1cm, minimum height=0.5cm, text centered] at (2, 0.5) (e1gfwf) {...};

\node[draw, fill=cyan!70, minimum width=1cm, minimum height=0.5cm, text centered] at (1, -0.5) (e2gfw) {$\mathcal{M}_{b}$};
\node[draw, fill=cyan!35, minimum width=1cm, minimum height=0.5cm, text centered] at (1, -1) (e2gfwf) {...};
\node[draw, fill=cyan!30, minimum width=1cm, minimum height=0.5cm, text centered] at (2, -0.5) (e2gfp) {$\mathcal{M}_{c}$};
\node[draw, fill=cyan!15, minimum width=1cm, minimum height=0.5cm, text centered] at (2, -1) (e2gfpf) {...};

\node[draw, fill=cyan!50, minimum width=1cm, minimum height=0.5cm, text centered] at (1, -2) (e3gfh) {$\mathcal{M}_{a}$};
\node[draw, fill=cyan!25, minimum width=1cm, minimum height=0.5cm, text centered] at (1, -2.5) (e3gfhf) {...};
\node[draw, fill=cyan!70, minimum width=1cm, minimum height=0.5cm, text centered] at (2, -2) (e3gfw) {$\mathcal{M}_{b}$};
\node[draw, fill=cyan!35, minimum width=1cm, minimum height=0.5cm, text centered] at (2, -2.5) (e3gfwf) {...};

\node[draw, fill=cyan!50, minimum width=1cm, minimum height=0.5cm, text centered] at (1, -3.5) (e4gfh) {$\mathcal{M}_{a}$};
\node[draw, fill=cyan!25, minimum width=1cm, minimum height=0.5cm, text centered] at (1, -4) (e4gfhf) {...};
\node[draw, fill=cyan!10, minimum width=1cm, minimum height=0.5cm, text centered] at (2, -3.5) (e4gfo) {$\mathcal{M}_{d}$};
\node[draw, fill=cyan!5, minimum width=1cm, minimum height=0.5cm, text centered] at (2, -4) (e4gfof) {...};

\draw[->, line width = 1.2pt, draw = purple] (-0.5,0.75) -- (0.5,0.75);
\draw[->, line width = 1.2pt, draw = purple] (-0.5,-0.75) -- (0.5,-0.75);
\draw[->, line width = 1.2pt, draw = purple] (-0.5,-2.25) -- (0.5,-2.25);
\draw[->, line width = 1.2pt, draw = purple] (-0.5,-3.75) -- (0.5,-3.75);

\draw[->, densely dotted, line width = 1.2pt, draw = purple] (2.5,0.75) -- (3.5,-0.75);
\draw[->, densely dotted, line width = 1.2pt, draw = purple] (2.5,-0.75) -- (3.5,-1.25);
\draw[->, densely dotted, line width = 1.2pt, draw = purple] (2.5,-2.25) -- (3.5,-1.75);
\draw[->, densely dotted, line width = 1.2pt, draw = purple] (2.5,-3.75) -- (3.5,-2.25);

\draw[-, dotted, line width = 1.2pt, draw = purple] (3.5,1.25) -- (8.75,1.25);
\draw[-, dotted, line width = 1.2pt, draw = purple] (3.5,1.25) -- (3.5,-4.25);
\draw[-, dotted, line width = 1.2pt, draw = purple] (8.75,1.25) -- (8.75,-4.25);
\draw[-, dotted, line width = 1.2pt, draw = purple] (3.5,-4.25) -- (8.75,-4.25);

\begin{scope}[on background layer]
  \draw[-, thick] (4.5,0.75) -- (4,0); 
  \draw[-, thick] (4.5,0.75) -- (5,0);
  \draw[-, thick] (5,0) -- (4.5,-0.75);
  \draw[-, thick] (5,0) -- (5.5,-0.75);
  \draw[-, dotted, thick] (4.0,0) -- (4.0,-0.5);
  \draw[-, dotted, thick] (4.5,-0.75) -- (4.5,-1.25);
  \draw[-, dotted, thick] (5.5,-0.75) -- (5.5,-1.25);
\end{scope}

\begin{scope}
    \clip (4.5,0.75) circle(0.25cm);
    \fill[blue!30] (4.25,0.75) rectangle (4.75,1);   
    \fill[cyan!70] (4.25,0.75) rectangle (4.75,0.5); 
\end{scope}
\draw (4.5,0.75) circle(0.25cm); 

\begin{scope}
    \clip (4,0) circle(0.25cm);
    \fill[blue!20] (3.75,0) rectangle (4.25,0.25);    
    \fill[cyan!50] (3.75,0) rectangle (4.25,-0.25);  
\end{scope}
\draw (4,0) circle(0.25cm); 

\begin{scope}
    \clip (5,0) circle(0.25cm);
    \fill[blue!30] (4.75,0) rectangle (5.25,0.25);    
    \fill[cyan!10] (4.75,0) rectangle (5.25,-0.25);  
\end{scope}
\draw (5,0) circle(0.25cm); 

\begin{scope}
    \clip (4.5,-0.75) circle(0.25cm);
    \fill[blue!20] (4.25,-0.75) rectangle (4.75,-0.5);   
    \fill[cyan!70] (4.25,-0.75) rectangle (4.75,-1); 
\end{scope}
\draw (4.5,-0.75) circle(0.25cm); 

\begin{scope}
    \clip (5.5,-0.75) circle(0.25cm);
    \fill[blue!30] (5.25,-0.75) rectangle (5.75,-0.5);    
    \fill[cyan!70] (5.25,-0.75) rectangle (5.75,-1);  
\end{scope}
\draw (5.5,-0.75) circle(0.25cm); 

\begin{scope}[on background layer]
  \draw[-, thick] (7,0.75) -- (6.5,0); 
  \draw[-, thick] (7,0.75) -- (7.5,0);
  \draw[-, thick] (7.5,0) -- (7,-0.75);
  \draw[-, thick] (7.5,0) -- (8,-0.75);
  \draw[-, dotted, thick] (6.5,0) -- (6.5,-0.5);
  \draw[-, dotted, thick] (7,-0.75) -- (7,-1.25);
  \draw[-, dotted, thick] (8,-0.75) -- (8,-1.25);
  \draw[-, dotted, thick] (5.5,0) -- (6,0);
\end{scope}

\begin{scope}
    \clip (7,0.75) circle(0.25cm);
    \fill[blue!20] (6.75,0.75) rectangle (7.25,1);   
    \fill[cyan!50] (6.75,0.75) rectangle (7.25,0.5); 
\end{scope}
\draw (7,0.75) circle(0.25cm); 

\begin{scope}
    \clip (6.5,0) circle(0.25cm);
    \fill[blue!30] (6.25,0) rectangle (6.75,0.25);    
    \fill[cyan!70] (6.25,0) rectangle (6.75,-0.25);  
\end{scope}
\draw (6.5,0) circle(0.25cm); 

\begin{scope}
    \clip (7.5,0) circle(0.25cm);
    \fill[blue!30] (7.25,0) rectangle (7.75,0.25);    
    \fill[cyan!50] (7.25,0) rectangle (7.75,-0.25);  
\end{scope}
\draw (7.5,0) circle(0.25cm); 

\begin{scope}
    \clip (7,-0.75) circle(0.25cm);
    \fill[blue!30] (6.75,-0.75) rectangle (7.25,-0.5);   
    \fill[cyan!70] (6.75,-0.75) rectangle (7.25,-1); 
\end{scope}
\draw (7,-0.75) circle(0.25cm); 

\begin{scope}
    \clip (8,-0.75) circle(0.25cm);
    \fill[blue!20] (7.75,-0.75) rectangle (8.25,-0.5);    
    \fill[cyan!50] (7.75,-0.75) rectangle (8.25,-1);  
\end{scope}
\draw (8,-0.75) circle(0.25cm); 

\begin{scope}[on background layer]
  \draw[-, thick] (4.5,-1.75) -- (4,-2.5); 
  \draw[-, thick] (4.5,-1.75) -- (5,-2.5);
  \draw[-, thick] (5,-2.5) -- (4.5,-3.25);
  \draw[-, thick] (5,-2.5) -- (5.5,-3.25);
  \draw[-, dotted, thick] (4.0,-2.5) -- (4.0,-3);
  \draw[-, dotted, thick] (4.5,-3.25) -- (4.5,-3.75);
  \draw[-, dotted, thick] (5.5,-3.25) -- (5.5,-3.75);
\end{scope}

\begin{scope}
    \clip (4.5,-1.75) circle(0.25cm);
    \fill[blue!30] (4.25,-1.75) rectangle (4.75,-1.5);   
    \fill[cyan!50] (4.25,-1.75) rectangle (4.75,-2); 
\end{scope}
\draw (4.5,-1.75) circle(0.25cm); 

\begin{scope}
    \clip (4,-2.5) circle(0.25cm);
    \fill[blue!30] (3.75,-2.5) rectangle (4.25,-2.25);    
    \fill[cyan!70] (3.75,-2.5) rectangle (4.25,-2.75);  
\end{scope}
\draw (4,-2.5) circle(0.25cm); 

\begin{scope}
    \clip (5,-2.5) circle(0.25cm);
    \fill[blue!20] (4.75,-2.5) rectangle (5.25,-2.25);    
    \fill[cyan!50] (4.75,-2.5) rectangle (5.25,-2.75);  
\end{scope}
\draw (5,-2.5) circle(0.25cm); 

\begin{scope}
    \clip (4.5,-3.25) circle(0.25cm);
    \fill[blue!30] (4.25,-3.25) rectangle (4.75,-3);   
    \fill[cyan!50] (4.25,-3.25) rectangle (4.75,-3.5); 
\end{scope}
\draw (4.5,-3.25) circle(0.25cm); 

\begin{scope}
    \clip (5.5,-3.25) circle(0.25cm);
    \fill[blue!30] (5.25,-3.25) rectangle (5.75,-3);    
    \fill[cyan!70] (5.25,-3.25) rectangle (5.75,-3.5);  
\end{scope}
\draw (5.5,-3.25) circle(0.25cm); 

\begin{scope}[on background layer]
  \draw[-, thick] (7,-1.75) -- (6.5,-2.5); 
  \draw[-, thick] (7,-1.75) -- (7.5,-2.5);
  \draw[-, thick] (7.5,-2.5) -- (7,-3.25);
  \draw[-, thick] (7.5,-2.5) -- (8,-3.25);
  \draw[-, dotted, thick] (6.5,-2.5) -- (6.5,-3);
  \draw[-, dotted, thick] (7,-3.25) -- (7,-3.75);
  \draw[-, dotted, thick] (8,-3.25) -- (8,-3.75);
  \draw[-, dotted, thick] (5.5,-2.5) -- (6,-2.5);
\end{scope}

\begin{scope}
    \clip (7,-1.75) circle(0.25cm);
    \fill[blue!20] (6.75,-1.75) rectangle (7.25,-1.5);   
    \fill[cyan!50] (6.75,-1.75) rectangle (7.25,-2); 
\end{scope}
\draw (7,-1.75) circle(0.25cm); 

\begin{scope}
    \clip (6.5,-2.5) circle(0.25cm);
    \fill[blue!30] (6.25,-2.5) rectangle (6.75,-2.25);    
    \fill[cyan!70] (6.25,-2.5) rectangle (6.75,-2.75);  
\end{scope}
\draw (6.5,-2.5) circle(0.25cm); 

\begin{scope}
    \clip (7.5,-2.5) circle(0.25cm);
    \fill[blue!30] (7.25,-2.5) rectangle (7.75,-2.25);    
    \fill[cyan!70] (7.25,-2.5) rectangle (7.75,-2.75);  
\end{scope}
\draw (7.5,-2.5) circle(0.25cm); 

\begin{scope}
    \clip (7,-3.25) circle(0.25cm);
    \fill[blue!30] (6.75,-3.25) rectangle (7.25,-3);   
    \fill[cyan!70] (6.75,-3.25) rectangle (7.25,-3.5); 
\end{scope}
\draw (7,-3.25) circle(0.25cm); 

\begin{scope}
    \clip (8,-3.25) circle(0.25cm);
    \fill[blue!20] (7.75,-3.25) rectangle (8.25,-3);    
    \fill[cyan!10] (7.75,-3.25) rectangle (8.25,-3.5);  
\end{scope}
\draw (8,-3.25) circle(0.25cm);

\end{tikzpicture}}
    \vspace{-1em}    
    \caption{Illustration of WMSDT from data (left), via categories (middle), to decision trees (right). Darker shading denotes higher weight for $\mathcal{D}_i$ or $\mathcal{M}_i$.}
    \label{fig:wmsdt}
\end{figure}
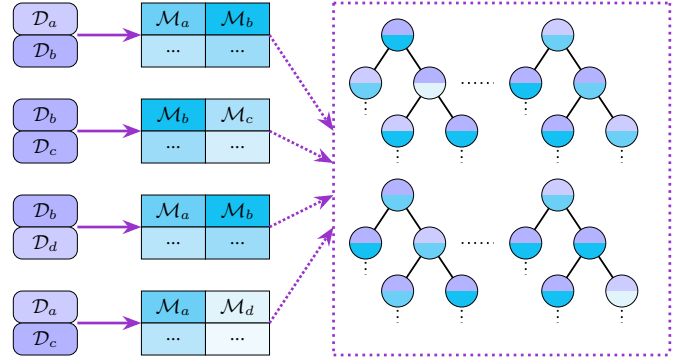

\begin{algorithm}
\caption{WMSDTE}
\label{alg:wmsdt}
\begin{algorithmic}[1]
    \Statex \hspace*{-1.6em} \textbf{Input:} $\left\{ \left(\bm{x}_i, y_i \right) \mid i = 1, \cdots, n_{\text{train}} \right\}$; $\left\{ \pi_d^{\mathcal{D}} \right\}$; $\left\{ \pi_m^{\mathcal{M}} \right\}$; $|\mathcal{D}_{\nu}|$; $|\mathcal{M}_{\nu}|$; $J$; $H_{\max}$; $n_h$;
    \Statex \textit{node $\nu$ splitting criterion}: A node stops splitting if
    \Statex \hspace*{2em} $(1)$ $|\mathcal{X}_{\nu}| < \text{minimum samples split}$ or
    \Statex \hspace*{2em} $(2)$ $\text{tree depth} \geq H_{\max}$
    \Statex \hspace*{-1.6em} \textbf{Output:} $\mathcal{H} = \left\{ h_j \mid j = 1, \cdots, J \right\}$
\State \textbf{Initialize} the set for storing constructed trees: $\mathcal{H} \leftarrow \varnothing$
\For{$j = 1$ to $J$}
    \State Bootstrap sample (size $n_h$) from the training set
    \State Initialize the tree root with $\left\{( \bm{x}_{i}^{j}, y_{i}^j) \mid i = 1, \cdots, n_h \right\}$
    \While{\textit{splitting criterion is not met for node $\nu$}}
        \State Randomly select $|\mathcal{D}_{\nu}|$ number of sites without replacement from $\mathcal{D}$ according to $\left\{ \pi_d^{\mathcal{D}} \right\}$
        \State Randomly select $|\mathcal{M}_{\nu}|$ number of categories without replacement from $\mathcal{M}$ according to $\left\{ \pi_m^{\mathcal{M}} \right\}$
        \State Construct the feature subset at node $\nu$:
            \begin{equation*}
                \mathcal{X}_{\nu} \! = \! \left\{ x_{i, d, m, w} \! \mid \! d \! \in \! \mathcal{D}_{\nu}, m \! \in \! \mathcal{M}_{\nu}, w \! \in \! \mathcal{W}_{m}, i \! = \! 1, \! \cdots \!, n_h \right\}
            \end{equation*}
        \State In $\bm{x}_{\nu}$, find the best split using \eqref{eq:node_split_decision_criteria} and \eqref{eq:unweighted_pr_nu}
        \State Split node $\nu$ into its child nodes
    \EndWhile
    \State Add the resulting decision tree $h_j$ to $\mathcal{H}$
\EndFor
\State {\textbf{return}} $\mathcal{H}$
\end{algorithmic}
\end{algorithm}

\subsubsection{Two cross-stage MS boosting (TCSMSB)} \label{subsubsec:ensemble_model_tsmsb}
In Section \ref{subsubsec:ensemble_model_wmsdt}, we use random feature sampling and site-based weighting to reduce predictor redundancy and balance site influence. We additionally employ bootstrap sampling for prediction variance reduction. To further address the multicollinearity and to speed up the model's computation speed,
in this section, we combine knowledge-based data processing (data-level fusion) with an appropriate dimensionality reduction technique (feature-level fusion), thereby summarizing the data in two distinct ways. We then adopt a boosting framework that adaptively selects between these representations (decision-level fusion).

\paragraph{Category- and site-wise grouping} To handle multiple-site weather datasets and reduce multicollinearity, we apply data-level fusion by organizing related features into groups in two different ways. The first strategy groups features by category. We assume that features in the same category tend to exhibit stronger interdependencies, which we capture and reduce through category-wise grouping. For example, near-surface temperature, net solar and thermal radiation, and sensible heat flux are jointly governed by surface energy exchange, so they co-vary more tightly within the heat group than with the $10$ m u- and v-components of wind in the wind group. For each measurement site $d \in \mathcal{D}$ and each category $m \in \mathcal{M}$, gather the data
\begin{equation*}
    \left\{ \bm{x}_{i, d, m}^{\text{c-w}} = \left(x_{i, d, m, w} \right)_{w \in \mathcal{W}_{m}} \mid i = 1, ..., n\right\}
\end{equation*}
into a single matrix 
\begin{equation*}
    \bar{\bm{x}}_{d, m}^{\text{c-w}} = \left[ ( \bm{x}_{1, d, m}^{\text{c-w}} )^{\top}, \cdots, ( \bm{x}_{n, d, m}^{\text{c-w}} )^{\top} \right]^{\top} \in \mathbb{R}^{n \times |\mathcal{W}_m|}. 
\end{equation*}
We apply PCA to $\bar{\bm{x}}_{d, m}^{\text{c-w}}$ and select the smallest integer $k_{d, m}^{\text{c-w}}$ such that the cumulative variance explained is at least $\gamma^{\text{c-w}}$ where $k_{d, m}^{\text{c-w}} \leq |\mathcal{W}_m|$. We then project $\bar{\bm{x}}_{d, m}^{\text{c-w}}$ onto the top $k_{d, m}^{\text{c-w}}$ principal directions to obtain a dimensionally reduced matrix $\bar{\bm{z}}_{d, m}^{\text{c-w}} \in \mathbb{R}^{n \times k_{d, m}^{\text{c-w}}}$ \cite{jolliffe_pca_2, jolliffe_pca_6}. After repeating for all $m \in \mathcal{M}$, we concatenate the result matrices along their column dimension:
\begin{equation*}
    \bar{\bm{z}}^{\text{c-w}} = \left( \bar{\bm{z}}_{1, 1}^{\text{c-w}}, \cdots, \bar{\bm{z}}_{1, M}^{\text{c-w}}, \cdots, \bar{\bm{z}}_{D, 1}^{\text{c-w}}, \cdots, \bar{\bm{z}}_{D, M}^{\text{c-w}} \right), 
\end{equation*}
and 
\begin{equation*}
    \bar{\bm{z}}^{\text{c-w}} \in \mathbb{R}^{n \times \sum_{d = 1}^D\sum_{m = 1}^M k_{d, m}^{\text{c-w}}}.
\end{equation*}
The second strategy organizes measurements by site rather than by category. We assume that measurements from multiple sites for the same feature are correlated, especially if the sites are close to each other. Thus, by site-wise grouping, we capture and reduce these interdependencies at the site level for each feature. For each feature $w \in \mathcal{W}$, gather the data
\begin{equation*}
    \left\{ \bm{x}_{i, w}^{\text{s-w}} = \left(x_{i, d, w} \right)_{d \in \mathcal{D}} \mid i = 1, ..., n\right\}
\end{equation*}
into a single matrix
\begin{equation*}
    \bar{\bm{x}}_{w}^{\text{s-w}} = \left[ {(\bm{x}_{1, w}^{\text{s-w}})}^{\top}, {(\bm{x}_{2, w}^{\text{s-w}})}^{\top}, \cdots, {(\bm{x}_{n, w}^{\text{s-w}})}^{\top} \right]^{\top} \in \mathbb{R}^{n \times |\mathcal{D}|}. 
\end{equation*}
Following the same dimensional reduction procedure as in the category-wise grouping, we obtain the resulting matrix
\begin{equation*}
    \bar{\bm{z}}^{\text{s-w}} = \left( \bar{\bm{z}}_{1}^{\text{s-w}}, \bar{\bm{z}}_{2}^{\text{s-w}}, \cdots, \bar{\bm{z}}_{W}^{\text{s-w}} \right) \in \mathbb{R}^{n \times \sum_{w = 1}^W k_w^{\text{s-w}}}.
\end{equation*}

\paragraph{Dual-subset feature boosting} Unlike standard boosting algorithms that rely on a single weak learner at each iteration, TCSMSB trains two candidate learners using category and site-wise data groupings \cite{freund1997decision}. The dual perspective acknowledges that, because the weak learners in boosting are often tree-based and intentionally kept shallow, each individual model's capacity to represent complex structures is inherently limited \cite{friedman2000additive, schapire2003boosting}. By alternating between groupings, our method can embed both domain-level and localized signals within these smaller trees. To decide which arrangement best handles the current set of ``harder'' observations, TCSMSB compares the weighted errors of both learners at each iteration and selects the more accurate one to include in the ensemble, while the sample weights are updated according to this choice. As boosting progresses and the weight distribution evolves, the method dynamically exploiting whichever grouping is more effective at reducing misclassification. TCSMSB preserves the benefit of data-adaptive reweighting and allows the algorithm to leverage both grouping structures without committing to either representation in advance. At each iteration of our boosting procedure, we build two WMSDTs as our candidate weak learners: One trained on the category-wise data arrangement and the other on the site-wise arrangement. When building the WMSDT under the category-wise grouping, in each node $\nu$, we sample a subset of measurement sites $\mathcal{D}_{\nu} \subseteq \mathcal{D}$ without replacement according to the distribution $\left\{ \pi_{d}^{\mathcal{D}} \right\}_{d \in \mathcal{D}}$. For each selected sites $d \in \mathcal{D}_{\nu}$, all of the corresponding PCs extracted from the category-wise PCA
\begin{equation*}
    \left( \bar{\bm{z}}_{d, m}^{\text{c-w}} \right)_{d \in \mathcal{D}_{\nu}, m \in \mathcal{M}} \in \mathbb{R}^{n \times \sum_{d \in \mathcal{D}_{\nu}}\sum_{m = 1}^M k_{d, m}^{\text{c-w}}}
\end{equation*}
are included as candidate splitting features at the node $\nu$. Similarly, in the site-wise grouping, we draw a random subset of categories without replacement $\mathcal{M}_{\nu}$ using the distribution $\left\{ \pi_{m}^{\mathcal{M}} \right\}_{m \in \mathcal{M}}$ for node $\nu$ in WMSDT. For each selected category $m \in \mathcal{M}_{\nu}$, we include all site-wise PCs associated with that category
\begin{equation*}
    \left( \bar{\bm{z}}_{w}^{\text{s-w}} \right)_{w \in \mathcal{W}_{m}} \in \mathbb{R}^{n \times \sum_{w \in \mathcal{W}_{m}}k_w^{\text{s-w}}}
\end{equation*}
as candidate features. After training both WMSDTs at each iteration, we select the one that achieves higher weighted accuracy and discard the inferior candidate. The selected weak learner is then assigned a weight according to the standard Real AdaBoost formula, and the observation weights are updated accordingly to emphasize misclassified samples \cite{freund1997decision, friedman2000additive}. These steps iterate until the ensemble is complete and the final classifier is formed as a weighted majority vote of the chosen weak learners, as detailed in Algorithm \ref{alg:tcsmsb}. The component failure probability can be estimated by 
\begin{equation*}
    \hat{\text{P}}\text{r}\left( \bm{x}_{\text{new}} \right) = 1 - \frac{1}{1 + \exp{\left( -2\sum_{j = 1}^{J}\psi_j\left(\bm{x}_{\text{new}}\right) \right)}}
\end{equation*}
where $\psi_{*}(\bm{x})$ is detailed in \eqref{eq:psi}, Algorithm \ref{alg:tcsmsb}.

\begin{algorithm}
\caption{TCSMSB}
\label{alg:tcsmsb}
\begin{algorithmic}[1]
\Statex \hspace*{-1.6em} \textbf{Input:} $\left\{ \left(\bm{x}_i, y_i \right) \mid i = 1, \cdots, n_{\text{train}} \right\}$; $\left\{ \pi_d^{\mathcal{D}} \right\}$; $\left\{ \pi_m^{\mathcal{M}} \right\}$; $\gamma^{\text{c-w}}$; $\gamma^{\text{s-w}}$; $J$; $H_{\max}$;
\Statex \textit{node $\nu$ splitting criterion}: A node stops splitting if
\Statex \hspace*{2em} $(1)$ $|\mathcal{X}_{\nu}| < \text{minimum samples split}$ or
\Statex \hspace*{2em} $(2)$ $\text{tree depth} \geq H_{\max}$
\Statex \hspace*{-1.6em} \textbf{Output:} trained weak learners: $\mathcal{H} = \left\{ h_j \mid j = 1, \cdots, J \right\}$
\Statex \hspace*{2.2em} confidence score: $\Psi = \left\{ \psi_j \mid j = 1, \cdots, J \right\}$
\State Initialize the distribution over the training samples: 
\Statex \hspace*{3.5em} $\left\{ \pi_{\omega}^{\Omega} \right\} = \left\{ \pi_{\omega_i}^{\Omega} = 1/{n_{\text{train}}} \mid i = 1, \cdots, n_{\text{train}} \right\}$
\For{$j = 1$ to $J$}
    \State Initialize the root of $h_{j}^{\text{c-w}}$ with $\left\{ \left(\bm{x}_i, y_i \right) \right\}$ and $\left\{ \pi_{\omega}^{\Omega} \right\}$
    \While{\textit{splitting criterion is not met for node $\nu$}}
        \State Randomly select $|\mathcal{D}_{\nu}|$ number of sites without replacement from $\mathcal{D}$ according to $\left\{ \pi_d^{\mathcal{D}} \right\}$
        \State Construct the feature subset at node $\nu$ based on $\mathcal{D}_{\nu}$ and $\gamma^{\text{c-w}}$: $\bm{z}_{\nu} = \{ \bar{\bm{z}}_{d, m}^{\text{c-w}} \mid d \in \mathcal{D}_{\nu}, m \in \mathcal{\mathcal{M}} \}$
        \State In $\bm{z}_{\nu}$, find the best split using \eqref{eq:node_split_decision_criteria} and \eqref{eq:weighted_pr_nu}
    \EndWhile
    \State Obtain the decision tree $h_{j}^{\text{c-w}}$ and compute its prediction error: $\epsilon_{j}^{\text{c-w}} = \sum_{i = 1}^{n_{\text{train}}} \pi_{\omega_i}^{\Omega}\bm{1}[h_{j}^{\text{c-w}}(\bm{x}_i) \neq y_i]$
    \State Initialize the root of $h_{j}^{\text{s-w}}$ with $\left\{ \left(\bm{x}_i, y_i \right) \right\}$ and $\left\{ \pi_{\omega}^{\Omega} \right\}$
    \While{\textit{splitting criterion is not met for node $\nu$}}
        \State Randomly select $|\mathcal{M}_{\nu}|$ number of categories without replacement from $\mathcal{M}$ according to $\left\{ \pi_m^{\mathcal{M}} \right\}$
        \State Construct the feature subset at node $\nu$ based on $\mathcal{M}_{\nu}$ and $\gamma^{\text{s-w}}$: $\bm{z}_{\nu} = \left\{ \bar{\bm{z}}_{w}^{\text{c-w}} \mid w \in \mathcal{W}_{m}, m \in \mathcal{\mathcal{M}_{\nu}} \right\}$
        \State In $\bm{z}_{\nu}$, find the best split using \eqref{eq:node_split_decision_criteria} and \eqref{eq:weighted_pr_nu}
    \EndWhile
    \State Obtain the decision tree $h_{j}^{\text{s-w}}$ and compute its prediction error: $\epsilon_{j}^{\text{s-w}} = \sum_{i = 1}^{n_{\text{train}}} \pi_{\omega_i}^{\Omega}\bm{1}[h_{j}^{\text{s-w}}(\bm{x}_i) \neq y_i]$
    \State \textbf{If} $\epsilon_j^{\text{s-w}} < \epsilon_j^{\text{c-w}}$: $h_j \leftarrow h_j^{\text{s-w}}$; \textbf{ Else} $h_j \leftarrow h_j^{\text{c-w}}$
    \State For each $i = 1, \cdots, n_{\text{train}}$: 
    \Statex
        \begin{equation}
            \psi_j(\bm{x}_{i}) = \frac{1}{2} \ln{\left(\frac{\text{Pr}_j(y = 1 | \bm{x}_i)}{1 - \text{Pr}_j(y = 1 | \bm{x}_i)}\right)} \label{eq:psi}
        \end{equation}
    \Statex
        \begin{equation*}
            \pi_{\omega_i}^{\Omega} \leftarrow \frac{\pi_{\omega_i}^{\Omega} \exp{\left( -y_i \psi_j\left(\bm{x}_i\right) \right)}}{\sum_{i = 1}^{n_{\text{train}}} \pi_{\omega_i}^{\Omega} \exp{\left( -y_i \psi_j\left(\bm{x}_i\right) \right)}}
        \end{equation*}
\EndFor
\State {\textbf{return}} $\mathcal{H}$, $\Psi$
\end{algorithmic}
\end{algorithm}

\section{Reliability-aware DER Control}\label{sec:opt}

We build on the distribution system model in \cite{zhang2025towards} and a cost reliability model (CRM) that co-optimizes operating cost and EENS, which depends on the ambient- and decision-dependent component failure rates. We then extend this to a multi-source cost reliability model (MCRM) that incorporates meteorological covariates from multiple gridded weather sources via WMSDT and TCSMSB, which map operating conditions and covariates to component risks. Relative to \cite{zhang2025towards}, we also incorporate photovoltaic (PV) units with active power capability. Section~\ref{subsec:rel_integration} describes the model, and Section~\ref{subsec:comp_approach} presents the computational approach.

\subsection{Reliability-aware active distribution grid model} \label{subsec:rel_integration}

The objective function of the reliability-aware model is given as:
\begin{align}
    Obj: \min_{\bm{v}} \left[ \sum_{t \in \mathcal{T}} \mathcal{C}^{\text{op}}_{t}(\bm{v}) + \sum_{t \in \mathcal{T}} \mathcal{C}_{t}^{\text{eens}}(\bm{v}) \right],
\label{eq:CRM_objective}
\end{align}
with
\begin{align*}
    \mathcal{C}_{t}^{\text{eens}}(\bm{v}) &= \left(\Theta_{0, t}^{\text{b}}(\bm{v}) \text{Pr}_{0, t}^{\text{b}}(\bm{v}) \right) \\
    &+\!\!\! \sum_{i \in \mathcal{N}^{+}}\!\! \Theta_{i, t}^{\text{b}}(\bm{v}) \!\! \left(\!\!1\! - \!\left[ 1 - \text{Pr}_{i,t}^{\text{b}}(\bm{v}) \right]\!\!\!\!\! \prod_{j \in \text{MCS}_{b_i}} \!\!\!\!\!\left[ 1 \! - \! \text{Pr}_{j, t}^{\text{l}}(\bm{v}) \right]\!\!\right), \label{eq:reli_measure}
\end{align*}
and
\begin{align*}
    \Theta_{0, t}^{\text{b}} &= \theta_{\text{b}_0}  p_{0, t}, \\
    \Theta_{i, t}^{\text{b}} &= \theta_{i}^{\text{c}} p_{i, t}^{\text{c}} \! + \! \theta_{i}^{\text{PV}} p_{i, t}^{\text{PV}} \! + \! \theta_{i}^{\text{DG}} p_{i, t}^{\text{DG}}\!  +  \! \theta_{i}^{\text{B,c}} p_{i, t}^{\text{B,c}} \! + \! \theta_{i}^{\text{B,d}} p_{i, t}^{\text{B,d}} \! + \! \theta_{i}^{\text{DR}} p_{i, t}^{\text{DR}}, 
\end{align*}
where $\mathcal{T}$ represents the set of modeled time steps, indexed by $t$; $\bm{v}$ denotes the vector of all decision variables (e.g., including power production of distributed generation at bus $i$, time step $t$: $p_{i,t}^{\rm DG}$, solar power set points: $p_{i,t}^{\rm PV}$, battery charging and discharging: $p_{i,t}^{\rm B,c}$, $p_{i,t}^{\rm B,d}$, demand response: $p_{i,t}^{\rm DR}$, and fixed consumption: $p_{i,t}^{\rm c}$). 
We write
$\rm{Pr}_{\textit{i,t}}^{\rm b}$, $\rm{Pr}_{\textit{i,t}}^{\rm l}$ for the interval unreliability (i.e., the probability that a bus or line will fail in time step $t$), $\Theta_{i, t}^{\rm b}$ is the total resulting cost in the event of a failure, and $\theta_{i, t}$ are the individual cost of de-energizing loads or resources.
We define the interval unreliability of the substation (indexed by $0$), buses (indexed by $i$) and lines (indexed by their downstream bus $i$) through the logistic regression model:
\begin{itemize}
    \item Interval unreliability of substation:
        \begin{equation*}
            \text{Pr}_{0,\textit{t}}^{\rm b} = \frac{1}{1 + \lambda_{\text{b}, 0}  e^{-\left(\beta_{0}^{\rm b} \left|p_{0, t}\right|\right)}}. \label{eq:fail_sub}
        \end{equation*}
    \item Interval unreliability of bus $i$, $i \in \mathcal{N}^{+}$:
        \begin{align*}
            \text{Pr}_{i, t}^{\text{b}} &= \frac{1}{1 + \lambda_{\text{b}, i}  e^{-\left(\beta_{i}^{\text{b}} \left|- p_{i, t}^{\text{c}} + p_{i, t}^{\text{DG}} + p_{i, t}^{\text{PV}} - p_{i, t}^{\text{B,c}} + p_{i, t}^{\text{B,d}} + p_{i, t}^{\text{DR}}\right| \right)}} \\
            &= \frac{1}{1 + \lambda_{\text{b}, 0} e^{-(\beta_0^{\text{b}}\tilde{p}_{i, t})}}. \label{eq:fail_bus}
        \end{align*}
    \item Interval unreliability of line $i$, $i \in \mathcal{N}^{+}$:
        \begin{equation*}
            \text{Pr}_{\textit{i,t}}^{\text{l}} = \frac{1}{1 + \lambda_{\text{l}, i}  e^{-\left(\beta_{i}^{\text{l}} l_{i, t} \right)}}. \label{eq:fail_line}
        \end{equation*}
\end{itemize}
We adopt the constraints of MCRM from \cite[Eqs.(1)-(20)]{zhang2025towards} and summarize them below:
\begin{align*}
    &\text{Thermal bounds and power flow relations: \cite[Eqs.(1)-(4)]{zhang2025towards},} \\
    &\text{Nodal power balance and voltage limits: \cite[Eqs.(5)-(10)]{zhang2025towards},} \\
    &\text{Substation power flow limits: \cite[Eqs.(11)-(12)]{zhang2025towards},} \\
    &\text{DER limits with battery state of charge: \cite[Eqs.(13)-(20)]{zhang2025towards}.}
\end{align*}

\subsection{Computational approach} \label{subsec:comp_approach}
Solving the MCRM involves two main challenges: the objective function's non-linear, non-convex nature and the integration of tree-based probability estimates into a logistic framework. To address the first, we apply the sequential convex programming (SCP) approach for non-convex optimization from \cite{zhang2025towards}. For the second challenge, the non-parametric ensemble predictions in Section \ref{sec:tree_ensemble} are cast into a parametric form via logistic regression, ensuring that the SCP of \cite{zhang2025towards} remains valid. Because the MCRM is solved repeatedly, the logistic model is re-parameterized as needed if the solution moves beyond the range underlying the current approximation. Under this methodology, ensemble-predicted failure probabilities serve as response, while power flow information (net power demand for buses and current square magnitude for lines) act as regressor for the fitted model. In this section, we formalize the necessary iterative procedure for approximating the ensemble-generated probabilities via a logistic regression fit. Define $\mathcal{V}_{\text{sub}}^{[k]}$ as the set of power-flow information solutions obtained in iteration $k$. Specifically, for $i \in \mathcal{N}^+$, $p_{0, t}$ denotes the substation power flow, $p_{i, t}$ denotes the net power demand at each bus, and $l_{i, t}$ denotes the current square magnitude at each line. Initially,
\begin{equation*}
    \mathcal{V}_{i}^{[0]} = \left\{ v_{i, t}^{[0]} \mid t \in \mathcal{T} \right\}
\end{equation*}
where $v_{i, t}^{[k]}$ denotes the power flow solution of component $i$ at time $t$ during iteration $k$. After the $k^{th}$ iteration, 
\begin{equation*}
    \mathcal{V}_{i}^{[k]} = \mathcal{V}_{\text{i}}^{[k-1]} \bigcup \left\{ v_{i, t}^{[k]} \mid t \in \mathcal{T} \right\}
\end{equation*}
The process of fitting the logistic regression after the $k^{th}$ is the following.
First, we form a sampling range after the $k^{th}$ iteration for each component $i$ as
\begin{equation*}
    \mathcal{SR}_i^{[k]} = \left[ \min_{t \in \mathcal{T}} \left(v_{i, t}^{[k]}\right) - 2\sigma_{\mathcal{V}_{i}^{[k]}}, \text{ } \max_{t \in \mathcal{T}} \left(v_{i, t}^{[k]} \right) + 2\sigma_{\mathcal{V}_{i}^{[k]}} \right].
\end{equation*}
We then uniformly randomly sample $n_{\text{reg}}$ points of $v_i$ from the interval $\mathcal{SR}_i^{[k]}$, thus obtaining
\begin{equation*}
    \mathcal{V}_{i}^{\text{reg}} = \left\{ v_{i, j}^{\text{reg}} \mid v_{i, j}^{\text{reg}} \in  \mathcal{SR}_i^{[k]} , j = 1, \cdots, n_{\text{reg}}\right\}
\end{equation*}
as depicted in Fig.~\ref{fig:sampling_range}.
\begin{figure}
    \centering
    \resizebox{\linewidth}{!}{\begin{tikzpicture}
    \draw[dotted, thick, blue!50] (0,0) -- (3,0) ;
    \draw[dotted, thick, blue!50] (3,0) -- (8,0) ;
    \draw[->, dotted, thick, blue!50] (8,0) -- (11,0) ;

    \draw[->, thick, darkgreen!50] (0, -0.5) -- (11, -0.5);

    \foreach \x [count=\i] in {3.4, 4.1, 4.8, 6.1, 6.4, 7.4} {
        \node[circle, draw=blue, fill=blue!20, inner sep=2pt] (circle\i) at (\x,0) {};
    }

    \foreach \x [count=\i] in {3.0, 8.0} {
        \node[circle, draw=blue, fill=blue!60, inner sep=2pt] (circle\i) at (\x,0) {};
    }

    \draw[->, thick, blue] (3.0,0.2) -- ++(0,0.5)
       node[xshift=1pt, yshift=7pt] {\footnotesize $\min_{t \in \mathcal{T}} \left(v_{i, t}^{[k]} \right)$};
       
    \draw[->, thick, blue!50] (4.8,0.2) -- ++(0,0.5)
       node[xshift=1pt, yshift=7pt] {\footnotesize $v_{i, t}^{[k]}$};

    \draw[->, thick, blue] (8.0,0.2) -- ++(0,0.5)
       node[xshift=1pt, yshift=7pt] {\footnotesize $\max_{t \in \mathcal{T}} \left(v_{i, t}^{[k]} \right)$};

    \foreach \x [count=\j] in {1.0, 10} {
        \node[draw=darkgreen, fill=darkgreen!60, regular polygon, regular polygon sides=3, shape border rotate=0, minimum size=6pt, inner sep=0pt, yscale=1.8] (triangle\j) at (\x,-0.7) {};
    }

    \draw[->, thick, darkgreen] (1.0,-1) -- ++(0,-0.5)
       node[xshift=1pt, yshift=-10pt] {\footnotesize $\min_{t \in \mathcal{T}} \left(v_{i, t}^{[k]} \right) - 2 \sigma_{\mathcal{V}_i^{[k]}}$};

    \draw[->, thick, darkgreen!60] (5.5,-1) -- ++(0,-0.5)
       node[xshift=1pt, yshift=-9pt] {\footnotesize $v_{i, j}^{\text{reg}}$};

    \draw[->, thick, darkgreen] (10,-1) -- ++(0,-0.5)
       node[xshift=1pt, yshift=-10pt] {\footnotesize $\max_{t \in \mathcal{T}} \left(v_{i, t}^{[k]} \right) + 2 \sigma_{\mathcal{V}_i^{[k]}}$};
    
    \foreach \x [count=\j] in {2.4, 5.5, 7.7} {
        \node[draw=darkgreen, fill=darkgreen!30, regular polygon, regular polygon sides=3, shape border rotate=0, minimum size=6pt, inner sep=0pt, yscale=1.8] (triangle\j) at (\x,-0.7) {};
    }

\end{tikzpicture}}
    \vspace{-1.5em}
    \caption{Illustration of the sampling interval $\mathcal{SR}_i^{[k]}$.}
    \label{fig:sampling_range}
\end{figure}
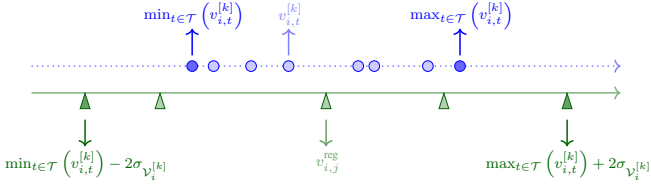

We combine each sampled point $v_{i, j}$ of component $i$ with the known weather data, and input these regressors into the trained ensemble model detailed in Section \ref{sec:tree_ensemble}. 
This yields a predicted failure probability for component $i$ after the $k^{th}$ iteration. 
We then define a single logistic model for component $i$ across all time steps after the $k^{th}$ iteration by fitting a logistic regression to the paired data 
\begin{equation*}
    \left\{ \left(v_{i, j}^{\text{reg}}, \hat{\text{P}}\text{r}_{i, j} \right) \mid j = 1, \cdots, n_{\text{reg}} \right\}.
\end{equation*}
The iterative steps and the conditions for updating each component's logistic regression parameters are detailed in Algorithm \ref{alg:update_reg_model}. The complete procedure for solving MCRM with SCP is provided in Algorithm~\ref{alg:complete_CRM_alg} in \ref{appendix:complete_CRM_alg}.

\begin{algorithm}
\caption{Updating Regression Model for $k^{th}$ Iteration}
\label{alg:update_reg_model}
\begin{algorithmic}[1]
    \If{$v_{i, t}^{[k]} \notin \mathcal{SR}_{i}^{[k]}, \forall i \in \mathcal{N}, t \in \mathcal{T}$} 
        \For{each component $i$}
            \State Form $\mathcal{SR}_{i}^{[k]}$
            \State Obtain $\mathcal{V}_{i}^{\text{reg}}$
            \State Fit the logistic regression model for component $i$
            \State Update the parameter for component $i$, $\forall t \in \mathcal{T}$
            \State Re-solve MCRM
        \EndFor
    \EndIf
\end{algorithmic}
\end{algorithm}

\section{Case Study} \label{sec:case_study}

\subsection{Experiment design} \label{subsec:experiment}
Our experiment design addresses gaps between theoretical models and real-world applications. 
Component-wise failure data from real power systems are typically not publicly available, creating a challenge in obtaining a dataset tailored exactly to our research setting \cite{richard_epdr_4, zhang2025towards}. 
Therefore, we adopt a semi-synthetic experimental approach. This method involves using credible real-world datasets if available and, in cases where required data is unavailable. 
Practitioners with access to real failure and/or outage data can plug their available datasets into our method to replace the synthetic data.

\subsubsection{Power system and weather data} \label{subsubsec:system_data} 
We select an exemplary substation location at latitude $40.55^{\circ}$N and longitude $-74.34^{\circ}$W. 
Our analysis focuses on a single day of operating an active distribution system. We adopt a modified version of the radial IEEE $33$-bus test feeder as detailed in \cite{zhang2025towards} with additional PV units. 
Fig.~\ref{fig:33_bus_power_radial_system} in \ref{appendix:sys_layout} illustrates the system topology and shows the locations of DERs. The system's load profiles are obtained from the procedure outlined in \cite{zhang2025towards} \cite{thurner2018pandapower, nyiso} from publicly available load data. 
To add adequate PV capacity, we first calculate the system's peak load, defined as the maximum total load across all buses:
\begin{equation*}
    p^{\text{c}}_{\text{total, peak}} = \max_{t \in \mathcal{T}}\left( \sum_{i \in \mathcal{N}^+} p_{i, t}^{\text{c}} \right).
\end{equation*}
We define two penetration levels for the total PV capacity: a low penetration level at $5\%$ and a high penetration level at $50\%$ of the peak load:
\begin{equation*}
    p_{(5\%), \text{total}}^{\text{PV}} = 0.05 \times p^{\text{c}}_{\text{total,peak}} \text{ }, \quad p_{(50\%), \text{total}}^{\text{PV}} = 0.50 \times p^{\text{c}}_{\text{total,peak}}.
\end{equation*}

The selection of locations for PV installations is guided by the existing presence of DG and BESS units. 
Co-locating multiple DERs at the same bus simplifies operational coordination and potentially reduces additional infrastructure costs. Furthermore, buses already equipped with DG or BESS generally possess the necessary interconnection capacity and infrastructure, facilitating the integration of new PV systems without significant rewiring or infrastructure upgrades \cite{chinaris2025hybridization}. 
In the high PV penetration scenario, buses equipped with either a DG or BESS units receive PV installations. In the low PV scenario, only buses equipped with both DG and BESS units are selected. To assign the rated power of the PV inverter at each selected bus, we leverage the fact that a bus already hosting a significant amount of dispatchable or storage capacity can inherently accommodate a comparable level of solar generation \cite{lu2018multiobjective, bekele2010feasibility}.
For each bus $i \in \mathcal{N}_{\text{PV}}$, we set each bus' nominal PV capacity to be
\begin{equation*}
    p_{\text{rated},i}^{\text{PV}} = \frac{\max ( P_{i}^{\text{DG},\max}, P_{i}^{\text{BESS},\max} )}{\sum_{j \in \mathcal{N}^{\text{PV}}} \max ( P_{j}^{\text{DG},\max}, P_{j}^{\text{BESS},\max} )} \times p^{\text{PV}}_{(*), \text{total}}.
\end{equation*}

We obtain the baseline solar generation profile from \cite{nrel}. 
We select a standard $4$kW DC-rated system with the default DC-to-AC ratio of $1.2$, which yields an inverter rating of approximately $3.33$ kW AC \cite{nrel}. 
To appropriately scale the PV active power injection ($p_{i, t}^{\text{PV}}$) at each bus $i$, $i \in \mathcal{N}_{\text{PV}}$ to fit our system, we first identify the peak power in the reference profile as $p_{\text{pv original, peak}} = \max_{t \in \mathcal{T}}\left( p_{\text{pv original}, \text{ } t} \right)$, then 
\begin{equation*}
    p_{i, t}^{\text{PV}} = p_{\text{rated},i}^{\text{PV}} \times \frac{p_{\text{pv original}, \text{ } t}}{p_{\text{pv original, peak}}}.
\end{equation*}

To capture meteorological variables near our substation location, we use the weather dataset from the Copernicus service \cite{copernicus}. It provides gridded data in $0.25^{\circ}$ increments of latitude and longitude, and we identify the four nearest points that form a rectangle around the station (i.e., $D = 4$ measurement sites). The latitudes and longitudes of these weather datasets are $(40.54^{\circ}, -74.21^{\circ})$, $(40.54^{\circ}, -74.46^{\circ})$, $(40.79^{\circ}, -74.21^{\circ})$, and $(40.79^{\circ}, -74.46^{\circ})$, respectively. For each of these four measurement sets, we extract $18$ weather features (i.e., $W = 18$). In line with the Copernicus data structure, we organize these $18$ features into $4$ broader categories (i.e., $M = 4$, detailed in \ref{appendix:para_values}).

\subsubsection{Data for training, validation and testing} \label{subsubsec:model_train_data}
Due to the scarcity of real-life data, this section provides detailed explanations of how we constructed our ensemble model training data using all currently available datasets. Additionally, we outline how the data should be appropriately utilized when real-life observations become available, specifically addressing the challenge of accurately modeling rare failure events. 

\paragraph{Synthetic regressor data} 
For training, validation, and testing, we require a sufficient number of data points. The standard 33-bus test system dataset supplies only one static load vector \cite{thurner2018pandapower}. We extend it to $122$ days (April $01$, $2024$ to July $31$, $2024$) at a two-hour resolution ($12$ time steps per day) by scaling that snapshot according to the New York City system load obtained from \cite{nyiso}, following the method from \cite{zhang2025towards}. 
We focus on this warm season window because it targets the failure mechanisms that intensify with higher temperatures and peak loads.
(Including winter would also be possible and introduce distinct cold-related mechanisms (e.g., icing) and markedly different load patterns driven by heating demand \cite{panteli2016boosting, panteli2015influence, thornton2016role}.)
Moreover, limiting the horizon to a short period rather than spanning several years allows us to treat component condition as effectively constant, avoiding confounding from general long-term degradation \cite{wang2002reliability}.
In practice, the load series from April to July captures the dominant annual structure with July typically contains the annual system peak and exhibits the largest daily load variability, whereas April reflects cooler season conditions with daily profiles closer to winter \cite{online_eia2020hourly, online_eia2023peak}. After extending the load profile, we then feed the resulting $122 \times 12 = 1464$ distinct load snapshots into the CM model described in \cite{zhang2025towards}, obtaining $1464$ data points corresponding to net power demand (response variable $\mathcal{O}^{\text{b}}$) and another $1464$ data points corresponding to current magnitude (response variable $\mathcal{O}^{\text{l}}$). 
Each of these $1464$ data points for net power demand and current magnitude is paired with the synchronous two-hour weather record, giving $72$ meteorological covariates ($D = 4$, $W = 18$) per time step. 

To ensure a sufficient number of observations for training, validation, and testing, we apply a non-parametric bootstrap with replication factor of $20$. The final sample sizes for both buses and lines are $n_{\text{syn}} = 1464 \times 20 = 29280$. 
The predictor matrix is standardized column-wise (zero mean, unit variance). 
Formally:
\begin{align*}
    \bm{X}^{\diamond} &= \big\{ x_{i, d, m, w, o} =  X_{d, g, w, o}\left( \omega_i \right) \mid i = 1, \cdots, n_{\text{syn}}, \\
    d \in& \{1, \cdots, 4\}, m \in \{1, \cdots, 4\}, w \in \{1, \cdots, 18\}, o \in \mathcal{O}^{\diamond} \big\}
\end{align*}
with $\diamond = \text{b}$ for bus and $\diamond = \text{l}$ for line, respectively.

\paragraph{Synthetic response variable data} 
In real-world power system operations, component-level failures are rare \cite{toftaker2023accounting}. 
Detailed failure logs specifically within each two-hour window are generally unavailable. We generate synthetic failure data under the assumption that higher values of specific weather and power flow features correspond to an increased likelihood of component failures, as supported by evidence from numerous studies (e.g., \cite{do2023spatiotemporal, taylor2022statistical, banasik2024influence, souto2024identification, billinton2005consideration, billinton2006distribution, richard_epdr_3}). To assign failure labels ($\mathrm{fail} = 1$ or $\mathrm{non\text{-}fail} = -1$), 
we proceed as follows. 
Let $X_{d,m,w}^{\text{wf}}$ denote the random variable of $X_{d,m,w}^{\text{wf}}: \Omega^{\text{wf}} \rightarrow \mathbb{R}$, where $\Omega^{\text{wf}}$ spans all the weather features. We note that $\bm{X}^{\text{b}}$ and $\bm{X}^{\text{l}}$ share the same set of weather features. The realized dataset for $n_{\text{syn}}$ i.i.d. observations as
\begin{align*}
    \{x_{i,d,m,w}^{\text{wf}} = X_{d,m,w}^{\text{wf}}(\omega_i^{\text{wf}}) \mid i = 1, \cdots, n_{\text{syn}}, \\
    d \in \mathcal{D}, m &\in \mathcal{M}, w \in \mathcal{W}_{m}\},
\end{align*}
where $\{\omega_i^{\text{wf}}\}_{i = 1}^{n_{\diamond}} \subseteq \Omega^{\text{wf}}$. 
For each observation $i \in \{1, \cdots, n_{\text{syn}}\}$, measurement site $d \in \mathcal{D} = \{1, \cdots, 4\}$, and category $m \in \mathcal{M} = \{1, \cdots, 4\}$, we form the site-level category average as
\begin{equation*}
    \bar{x}_{i, d, m}^{\text{wf}} = \frac{1}{|\mathcal{W}_{m}|}\sum_{w \in \mathcal{W}_m}x_{i, d, m, w}^{\text{wf}}.
\end{equation*}
For each measurement site $d$, we calculate the distance-based site weights using Eq.\eqref{eq:site_wise_sample_weight} and we obtain the fixed weights as
\begin{equation*}
    \left( \pi_{1}^{\mathcal{D}}, \pi_{2}^{\mathcal{D}}, \pi_{3}^{\mathcal{D}}, \pi_{4}^{\mathcal{D}} \right) = (0.059, 0.860, 0.036, 0.045).
\end{equation*}
For each observation $i \in \{ 1, \cdots, n_{\text{syn}}\}$ and category $m \in \mathcal{M} = \{0, \cdots, 4\}$, define the site-weighted category average as
\begin{equation*}
    \bar{x}_{i, m}^{\text{wf}} = \sum_{d = 1}^{|\mathcal{D}|} \pi_d^{\mathcal{D}} \bar{x}_{i, d, m}^{\text{wf}}.
\end{equation*}
Let $X^{\text{pf}, \diamond}$ denotes the random variable of $X^{\text{pf}, \diamond}: \Omega^{\text{pf},\diamond} \rightarrow \mathbb{R}$, where $\Omega^{\text{pf}, \diamond}$ span all the power flow information features of $\diamond$, $\diamond = \{\text{b}, \text{l}\}$. The realized datasets for buses and lines each consist of $n_{\text{syn}}$ i.i.d. observations and are
\begin{equation*}
    \{x_{i}^{\text{pf}, \diamond} = X^{\text{pf}, \diamond}(\omega_i^{\text{pf}, \diamond}) \mid i = 1, \cdots, n_{\text{syn}}\}.
\end{equation*}
We define a summary matrix consisting of category-wise weighted means from the weather features combined with power flow information, given as
\begin{equation*}
    \bm{X}_{\text{agg}}^{\diamond} = \left( \bar{x}_{i, 1}^{\text{wf}}, \bar{x}_{i, 2}^{\text{wf}}, \bar{x}_{i, 3}^{\text{wf}}, \bar{x}_{i, 4}^{\text{wf}}, x_i^{\text{pf}, \diamond} \right) \in \mathbb{R}^{n_{\diamond} \times 5}.
\end{equation*}
To generate synthetic failure labels, we first randomly select $5\%$ of all observations and label them as failures. Next, we compute the column-wise means and standard deviations of $\bm{X}_{\text{agg}}^{\diamond}$. For each observation, if any column value exceeds its column mean by more than one standard deviation, we randomly assign failures to $75\%$ of these observations. If an operating point was previously labeled as a failure, this label is retained. We denote the response variable vector as $\bm{Y}^{\diamond}, \diamond = \{\text{b}, \text{l}\}$.

\subsubsection{Ensemble model training and prediction} \label{subsubsec:model_train_pred}

\paragraph{Training, validation, and testing} We stratify the complete sample $\left( \bm{X}^{\diamond}, \bm{Y}^{\diamond} \right)$ by the class label and hold out $20\%$ of the observations as an external test set. The remaining $80\%$ enters a $5$-fold cross validation loop for hyperparameter tuning. For the WMSDT (see Section~\ref{subsubsec:ensemble_model_wmsdt}), we search the grid $|\mathcal{D}_{\nu}| = |\mathcal{M}_{\nu}| \in \{1, 2, 3, 4\}$, $J \in \{50, 60, \cdots, 200\}$, and $H_{\max} \in \{3, 4, \cdots, 10\}$. 
For the TCSMSB (see Section~\ref{subsubsec:ensemble_model_tsmsb}), the additional tuning grid is $\gamma^{\text{c-w}} = \gamma^{\text{s-w}} \in \{60\%, 65\%, \cdots, 100\%\}$. We choose $|\mathcal{D}_{\nu}| = |\mathcal{M}_{\nu}| = 2$, $J = 150$, $H_{\max} = 8$ and $\gamma^{\text{c-w}} = \gamma^{\text{s-w}} = 90\%$ to balance the predictive accuracy and computational cost. 

\paragraph{Component failure probability calibration} 
The component failure probabilities produced by the ensemble classifiers feed into the MCRM, so their numerical accuracy influence every subsequent optimization result. Directly utilizing the raw component failure probabilities predicted by our proposed models based on the generated synthetic data is inappropriate, primarily for two reasons. Firstly, all the trees in our models are grown by minimizing Gini impurity, the learning objective rewards correct assignments but is indifferent to the absolute scale of the scores \cite{fawcett2006introduction, niculescu2005predicting, carreno2020analyzing}. They can output probability estimates whose values significantly deviate from the underlying event frequencies \cite{platt1999probabilistic, he2009learning}. On the other hand, the synthetic data contains a far larger proportion of failures than occur in practice. In this study, we assume prior knowledge of average failure rates for buses ($\text{Pr}_{i}^{\text{b,avg}} = 4.93 \times 10^{-6}, \forall i \in \mathcal{N}^{+}$) and lines ($\text{Pr}_{i}^{\text{l,avg}} = 1.14 \times 10^{-5}, \forall i \in \mathcal{N}^{+}$) per two-hour window, adopted from \cite{zhang2025towards}. In contrast, the proportion of failure cases within our synthetic dataset is approximately $44\%$ for both buses and lines, significantly higher than their actual occurrence rates. To account for the inflated failure rate, we apply a partial global prior calibration that adjusts predicted probabilities to match the known priors within the central probability region, while preserving the ranking of individual components. We set
\begin{equation}
    \text{Pr}_i^{\diamond} = 
    \begin{cases}
        &\frac{\hat{\text{P}}\text{r}_i^{\diamond, \text{raw}}\text{Pr}_i^{\diamond, \text{avg}}\frac{n^{\diamond}_{\text{syn}}}{n_{\text{syn,fail}}}}{\hat{\text{P}}\text{r}_i^{\diamond, \text{raw}}\text{Pr}_i^{\diamond, \text{avg}} \frac{n_{\text{syn}}}{n^{\diamond}_{\text{syn,fail}}} + (1 - \hat{\text{P}}\text{r}_i^{\diamond, \text{raw}})(1 - \text{Pr}_i^{\diamond, \text{avg}}) \frac{n_{\text{syn}}}{n_{\text{syn}} - n^{\diamond}_{\text{syn,fail}}}} \\
        &\sixteenquad \quad \quad \quad \hat{\text{P}}\text{r}_i^{\diamond,\text{raw}}\in \mathcal{CI}, \\
        &\hat{\text{P}}\text{r}_i^{\diamond,\text{raw}} \sixteenquad \quad \halfquad \text{otherwise}.
    \end{cases}
    \label{eq:global_prior_prob_calibration}
\end{equation}
where $\mathcal{CI} = [0.05, 0.95]$; $n_{\text{syn}, \text{fail}}^{\diamond}$ represents the number of failure instances for component type $\diamond$, out of a total of $n_{\text{syn}}$ observations, with $\diamond \in \{\text{b}, \text{l}\}$. \textit{W.l.o.g.}, the central probability region is set to be $[0.05, 0.95]$. We provide detailed justification of Eq.\eqref{eq:global_prior_prob_calibration} in \ref{appendix:global_prior_prob_calibration}.

\begin{remark}
    We apply global prior calibration because field observations of individual failures are unavailable. 
    When field failure data is available, however, directly fitting our proposed models to the raw real-life data and injecting the resulting probabilities into the MCRM is ill-advised. 
    Two statistical obstacles arise. Firstly, because of the inherently low failure rate, an excessively large sample size would be necessary to capture a sufficient number of failure instances, leading to computational inefficiency. Secondly, in samples characterized by an extremely low ratio of failure to non-failure cases, standard learners, including our proposed ensembles, optimize overall accuracy by favoring the majority class and consequently produce posterior probabilities that are miscalibrated for the minority class, as discussed in \cite{king2001logistic, he2009learning, ghosh2024class}. 
    A practical approach to addressing this issue involves restructuring the available dataset to balance the proportions between failure and non-failure cases. Common strategies include oversampling existing failure instances or synthetically generating additional failure cases using techniques such as SMOTE \cite{he2009learning, chawla2002smote}. As in our generated synthetic data case, following dataset restructuring, probability calibration is typically applied. When observations of individual failures are accessible, more sophisticated calibration methods, such as Platt scaling or isotonic regression, can be applied to accurately adjust predicted probabilities to match empirical failure rates \cite{platt1999probabilistic, guo2017calibration}.
\end{remark}

\subsection{Result and discussion} \label{subsec:result}

\subsubsection{Ensemble models} \label{table:ensemble_models_compare}
Table~\ref{table:ensemble_models_compare} summarizes the performance metrics for the proposed models (highlighted in grey) and the benchmark models: linear regression (LR; parametric linear model), decision tree (DT; non-parametric single tree), RF (ensemble of bagged trees), AdaBoost (ensemble of boosted trees), and SVM (large margin classifier). Here, accuracy is computed on the synthetic dataset as the proportion of instances for which the predicted label equals the observed label, i.e., $\text{accuracy} = \frac{1}{n}\sum_{i = 1}^{n_{\text{syn}}}\bm{1}\{\hat{y}_i = y_i\}$. Specifically, under identical conditions, the runtime for AdaBoost is $36.57$ seconds, whereas the runtime for TCSMSB is $22.12$ seconds.
\begin{table}[ht]
\centering
\begin{tabular}{lcccc}
\toprule
& MSE & Accuracy ($\%$) & Parameter \\
\midrule
LR        & $0.1691$ & $75.05$ & - \\
DT        & $0.1627$ & $75.87$ & $H_{\max} = 8$ \\
RF        & $0.1440$ & $76.56$ & $H_{\max} = 8$, $J = 150$ \\
  &  &  & $\text{max features} = 100\%$ \\
SVM       & $0.1238$ & $82.75$ & kernel = rbf \\
AdaBoost  & $0.1379$ & $81.00$ & $J = 150$ \\
  &  &  & base learner = WMSDT \\
\rowcolor{gray!20} 
WMSDTE    & $0.1022$ & $86.72$ & See Section \ref{subsubsec:model_train_pred} \\
\rowcolor{gray!20} 
TCSMSB   & $0.1759$ & $81.10$ & See Section \ref{subsubsec:model_train_pred} \\
\bottomrule
\end{tabular}
\caption{Performance comparison across models; highlighted rows corresponding to the approaches proposed in this paper}
\end{table}

\subsubsection{System operations}
\label{subsubsec:result_mcrm}

Figure~\ref{fig:con_obj_low} (Figure~\ref{fig:con_obj_high}) comprises $5$ subplots illustrating the objective function of the MCRM under the low (high) PV scenario. Subplot (i) presents the convergence behavior of the algorithm, featuring four curves representing original and linearized objective values from WMSDTE and TCSMSB, respectively. For all scenarios, the MCRM consistently converges within $40$ iterations, effectively reducing the objective by approximately $16.04\%$. Subplot (ii) isolates the operational cost component of the objective function across iterations, comparing outcomes from WMSDTE and TCSMSB. Subplot (iii) specifically addresses the EENS, which is subject to linearization during the optimization process. Two curves from each statistical model are displayed, representing original and linearized values. Subplot (iv) and subplot (v) illustrate the distribution of the final objective values derived through simulations. Subplot (iv) presents the distribution of the objective values obtained from a Monte Carlo simulation. 

The trained WMSDTE and TCSMSB models predict each bus and line's failure probabilities based on the MCRM's solutions. For each of the $1 \times 10^{5}$ trials, component failures are simulated using independent Bernoulli trials. The objective values are calculated using each simulated failure scenario. Subplot (v) employs an alternative validation methodology to assess the optimized solutions. Failure scenarios are directly drawn from synthetic datasets initially generated for training the component failure prediction models. To ensure the realism of the scenarios, these synthetic failure rates are scaled to match actual, very low, component failure probabilities. Decision variables form the MCRM's solutions (for both WMSDTE and TCSMSB) are applied to sampled failure cases to compute the corresponding objective values. This validation procedure includes only those trials featuring at least one component failure, yields a higher objective values to show the potential economic consequences under operational failure conditions. 
We highlight that the WMSDTE model yields lower objective values accompanied by greater variance compared to the TCSMSB model, which produces higher average objective values but lower variance. 
Hence, system operators may select the model appropriate to their risk preferences.

Figure~\ref{fig:der_utilization} shows DER operation the CM and MCRM model for the high PV scenario under the WMSDTE prediction model. 
The situations for low PV and using TCSMSB prediction model are qualitatively similar. 
In Figure~\ref{fig:der_utilization}(a) only some BESS is used to utilize low demand and low electricity prices. 
Other DERs remain unused in this case study because the system can remain within its safe operating conditions only using comparatively cheap power from the substation.
When we make the model reliability-aware, however, we observe that because of value added from reducing the expected cost of component failures, MCRM utilizes available  DG and DR more, with DG output increasing during the late-day peak. 

\begin{figure}
    \centering    \includegraphics[width=1.0\linewidth]{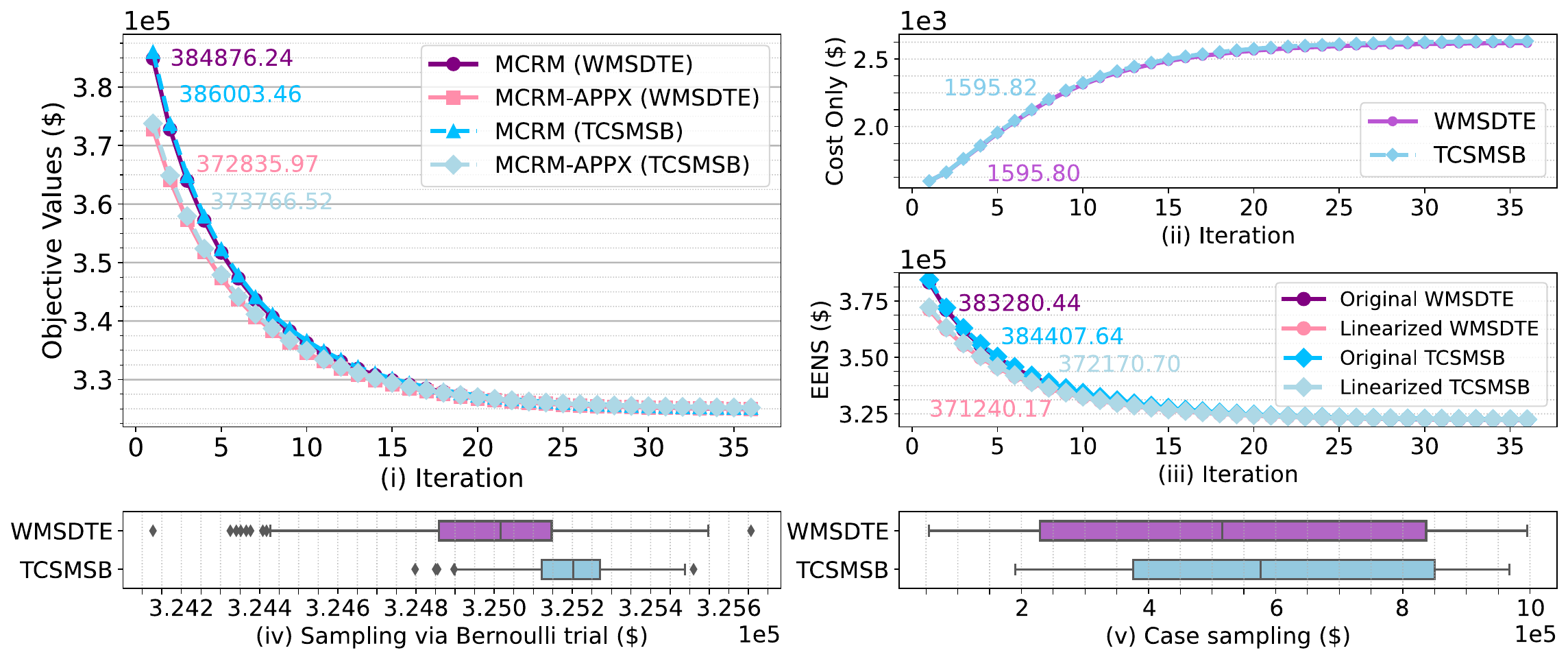}
    \caption{Convergence and objective function (low PV)}
    \label{fig:con_obj_low}
\end{figure}

\begin{figure}
    \centering    
    \includegraphics[width=1.0\linewidth]{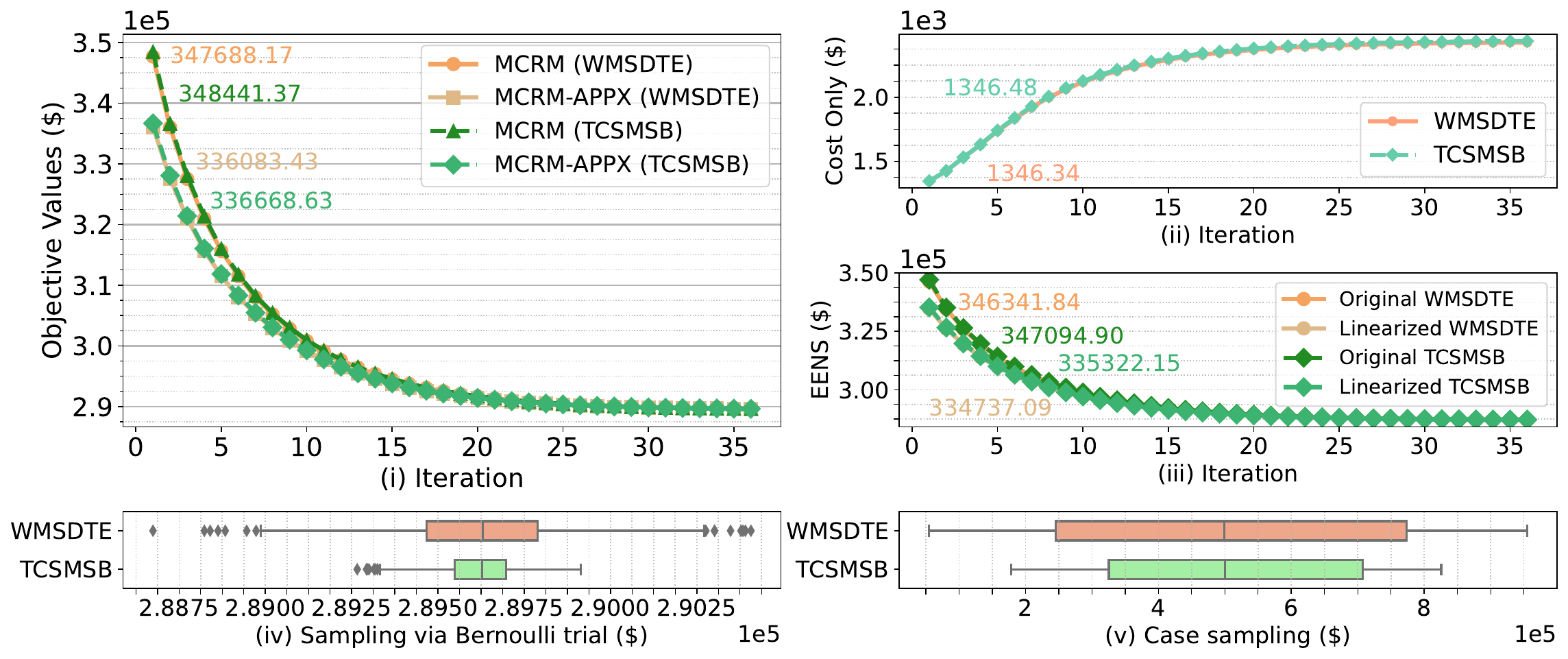}
    \caption{Convergence and objective function (high PV)}
    \label{fig:con_obj_high}
\end{figure}

\begin{figure}
    \centering
    \begin{tabular}{@{}c@{\hspace{1em}}c@{}}
        \subcaptionbox{CM\label{fig:der_utilization_cm}}{%
            \includegraphics[height=0.21\textwidth]{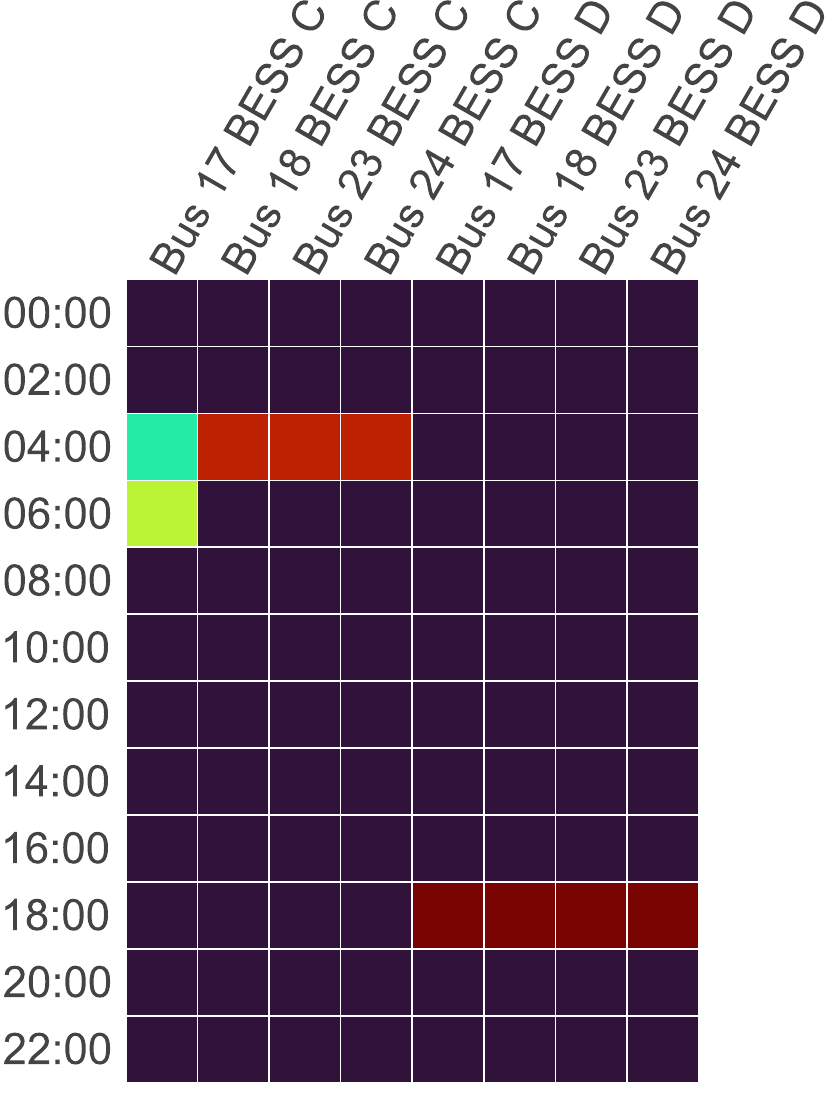}%
        }%
        &
        \subcaptionbox{MCRM\label{fig:der_utilization_mcrm}}{%
            \includegraphics[height=0.21\textwidth]{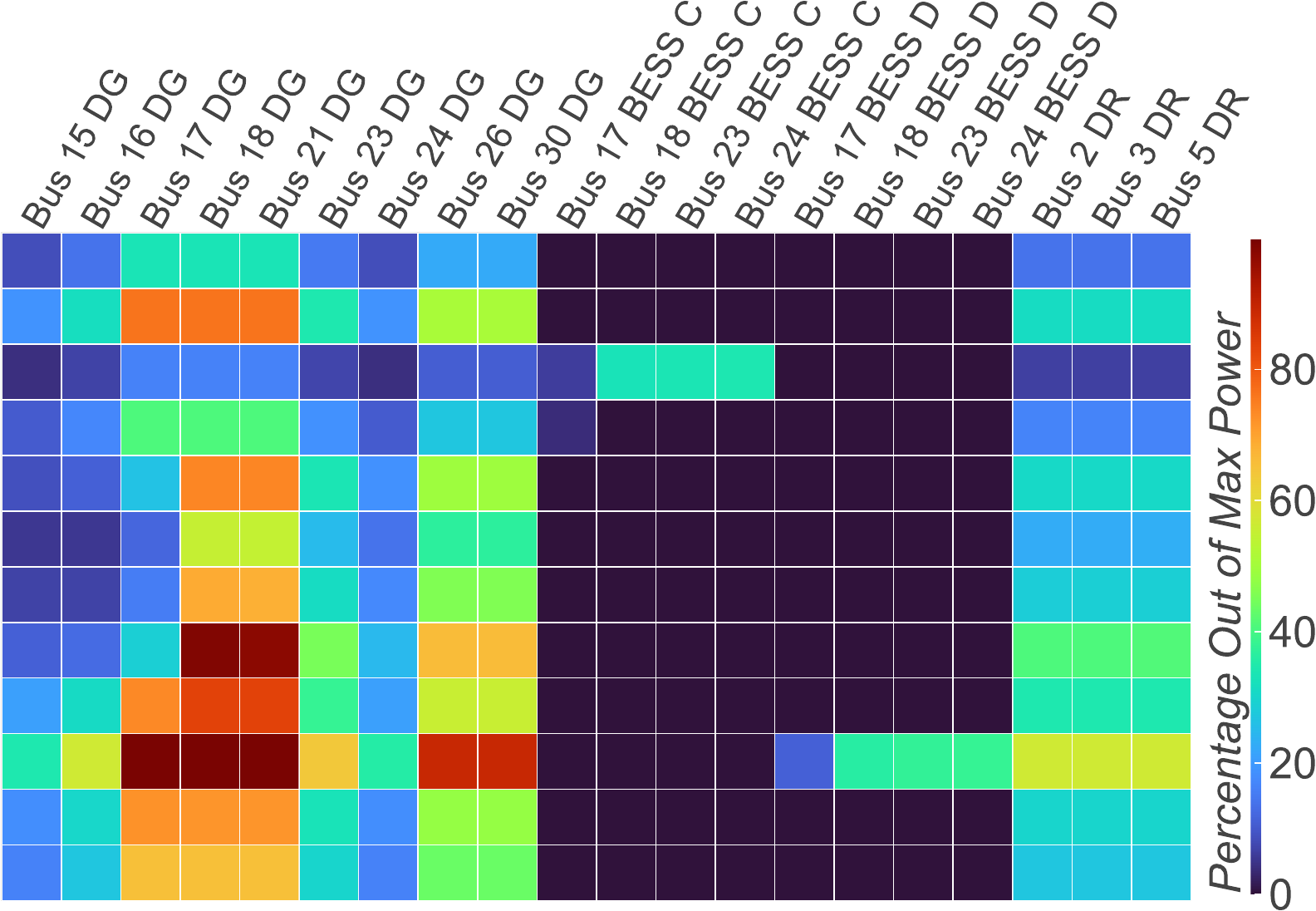}%
        }%
    \end{tabular}
    \caption{DER active power dispatch in percent utilization of maximum power limit resulting from CM and MCRM in the ``high PV'' scenario. In CM, DG and DR are inactive. ``BESS C'' denotes battery charging; ``BESS D'' denotes battery discharging.}
    \label{fig:der_utilization}
\end{figure}

\section{Conclusion} \label{sec:conclusion}
In this study, we incorporated decision-dependent system reliability into the power flow optimization problem, allowing us to simultaneously minimize system operational costs and enhance reliability. Our approach accounts for reliability influenced by both internal decision variables and external conditions. To effectively manage multi-dimensional external conditions, we propose two ensemble models designed for real world scenarios involving available meteorological data. Finally, we validated the effectiveness of our iterative solution approach in addressing the inherent non-convexity of the optimization problem. 

\appendices

\section{Active distribution grid model constraints} \label{appendix:constraints_math_mode} 
Relative to the constraint in \cite{zhang2025towards}, we replace \cite[Eq.(7)]{zhang2025towards}, with the following to incorporate PV: $\forall i \in \mathcal{N}^+, \forall t \in \mathcal{T}$:
\begin{equation*}
    f_{i, t}^p = \sum_{j \in \mathcal{S}c_i}\left( f_{j, t}^p + l_{j, t} R_{j}^{\text{l}} \right) + \sum_{\diamond_{\text{in}}}p_{i, t}^{\diamond_{\text{in}}} - \sum_{\diamond_{\text{out}}}p_{i, t}^{{\diamond_{\text{out}}}} + G_{i}v_{i, t},
\end{equation*}
where $\diamond_{\text{in}} = \{\text{(c)}, \text{(B,c)}\}$ and $\diamond_{\text{out}} = \{\text{(DG)}, \text{(PV)}, \text{(B,d)}, \text{(DR)}\}$.

\section{Proof of Proposition \ref{prop:small_mse_targeted_samp}} \label{appendix:benefits_weighted_feature_selection}
Under the assumptions that $|\mathcal{I}_{\rm{ter,n}}| \rightarrow \infty$ and $|\mathcal{I}_{\rm{ter,n}}|/n \rightarrow 0$, the conditions of Theorem~\ref{thm:rf_mse_bound} are satisfied, ensuring consistency of the RF estimator \cite{biau2012analysis, scornet2015consistency}. We first note that Theorem~\ref{thm:rf_mse_bound} is non-asymptotic. The inequality \eqref{eq:mse_total} holds for every sample size $n$ and for every collection of $\{ \xi_{n, j}\}_{j = 1}^{\kappa}$ such that $\{ \pi_{n, j}^{\Sigma} \}_{j = 1}^{\kappa}$ in \eqref{eq:coor_gen_samp_dist} defines a probability vector. In the case of uniform sampling $\pi_{n, j}^{\Sigma} = 1/\kappa, \forall n,j$:
\begin{equation*}
    \xi_{n, j}^{\rm{us}} = \begin{cases}
        (\varsigma-\kappa)/\kappa, & \text{if } j \in \Sigma \\
        1/\kappa, & \text{otherwise},
    \end{cases}
\end{equation*}
In the case of $\delta$-targeted sampling:
\begin{equation*}
    \xi_{n, j}^{\rm{ts}} = \begin{cases}
        (\delta_{n,j}\varsigma-\kappa)/\kappa, & \text{if } j \in \Sigma \\
        (1 - \sum_{j\in\Sigma}\frac{\delta_{n,j}}{\kappa})/(\kappa - \varsigma), & \text{otherwise},
    \end{cases}
\end{equation*}
We only need to consider the values of $\xi_{n,j}$ for $j \in \Sigma$ since $\xi_n$ only depends on the sequences $\{ (\xi_{n, j}: j \in \varsigma) \}$. We notice that in \eqref{eq:mse_variance_component}:
\begin{equation} \label{eq:component_xi_n}
    (1 + \xi_{n,j})^{-1}\left( 1 - \frac{\xi_{n,j}}{\varsigma - 1}\right)^{-1} = \frac{\varsigma - 1}{\varsigma - 1 + \xi_{n,j}(- \xi_{n, j} + \varsigma - 2)},
\end{equation}
which is monotonically decreasing w.r.t. $\xi_{n, j}$ when $\xi_{n, j} < (\varsigma-2)/2 \Leftrightarrow \delta_{n,i} < \kappa/2$ and \eqref{eq:component_xi_n} is strictly positive $\forall j \in \Sigma$. The condition $\delta_{n, i} < \kappa/2$ follows directly from determining the axis of symmetry of the parabola w.r.t. $\xi_{n, i}$. The strict positivity is justified as follows. Under the constraints of $\delta_{n,j}$, the boundary of $\xi_{n, j}$ is given as:
\begin{equation*}
    \frac{\varsigma - \kappa}{\kappa} < \xi_{n, j} < \frac{\frac{\kappa}{2}\varsigma - \kappa}{\kappa}, \quad \forall j\in \Sigma.
    \end{equation*}
The minimum value of the denominator of \eqref{eq:component_xi_n} is strictly larger than both of:
\begin{align}
    & \varsigma - 1 + \frac{\varsigma - \kappa}{\kappa}(- \frac{\varsigma - \kappa}{\kappa} + \varsigma - 2); \label{eq:mse_proof_positive_check_1} \\
    & \varsigma - 1 + \frac{\frac{\kappa}{2}\varsigma - \kappa}{\kappa}(- \frac{\frac{\kappa}{2}\varsigma - \kappa}{\kappa} + \varsigma - 2) \label{eq:mse_proof_positive_check_2}
\end{align}
An algebra check confirms that both \eqref{eq:mse_proof_positive_check_1} and \eqref{eq:mse_proof_positive_check_2} are strictly positive values. Thus, under this condition, $\xi_n$ is monotonically decreasing w.r.t. $\xi_{n, j}, \forall j \in \Sigma$ and $\Xi_n$ is monotonically decreasing w.r.t. $\xi_{n, j}$. Therefore, the first term of the MSE bound in \eqref{eq:mse_total} is monotonically decreasing w.r.t. $\xi_{n,j}$. We also note that the second half of the MSE bound in Eq.\eqref{eq:mse_total} is a monotonically decreasing function w.r.t. $\varpi_n$ and $\varpi_n^{\rm{us}} < \varpi_n^{\rm{ts}}, \forall n$. Thus, it will have strictly smaller values in the case of $\delta$-targeted sampling compared to uniform sampling, where the strict inequality follows from the fact that $\exists j \in \Sigma, \delta_{n, j} > 1$ so that $\xi_{n}^{\rm{pts}} < \xi_{n}^{\rm{uni}}$. Therefore, $\delta$-weighted sampling yields a strictly smaller upper bound on MSE than uniform sampling. We further clarify that since Theorem~\ref{thm:rf_mse_bound} is stated as a non-asymptotic result, our claim inherits this property and thus also holds in a non-asymptotic setting. $\blacksquare$

\section{Algorithm for solving MCRM} \label{appendix:complete_CRM_alg}
Algorithm \ref{alg:complete_CRM_alg} provides procedure for applying SCP to solve the MCRM. The formulation of CM and MCRM-ITE are introduced in \cite{zhang2025towards}. All notation used for the \textit{model parameter update} step is defined in Section \ref{subsec:comp_approach}.
\begin{algorithm}
\caption{Iterative approach for solving MCRM}
\label{alg:complete_CRM_alg}
\begin{algorithmic}[1]
    \Statex \hspace*{-1.6em} \textbf{Input:} CM parameter set, MCRM-ITE parameter set, maximum number of iteration: $k^{\text{max}}$
    \Statex \hspace*{-1.6em} \textbf{Output:} Optimized variable values: $\tilde{\bm{v}}_{\text{MCRM}}$ and objective value: $\rm{Obj}_{\text{MCRM}}$
    \State Solve CM \Comment{\textit{initialization}}
    \State Obtain $\bm{v}_{\text{CM}}$ and $\text{Obj}_{\rm{CM}}$ 
    \State Obtain initial variable guess: $\tilde{\bm{v}}_{\text{MCRM-ITE}}[0] \leftarrow \bm{v}_{\text{CM}}$ 
    \State Obtain initial objective value: 
    \Statex \qquad $\text{Obj}_{\rm{MCRM-ITE}}[0] \leftarrow \text{Obj}_{\rm{CM}}$
    \Statex \qquad $\text{Obj}_{\rm{MCRM-APPX}}[0] \leftarrow \text{Obj}_{\rm{CM}}$
    \For{$k=1,...,k^{\text{max}}$}  \Comment{\textit{solving iteratively}}
        \State linearize $\mathcal{C}^{\text{eens}}$ around $\tilde{\bm{v}}_{\text{MCRM-ITE}}[k - 1]$
        \State Solve MCRM-ITE
        \State Obtain $\tilde{\bm{v}}_{\text{MCRM-ITE}}[k]$ and $\text{Obj}_{\text{MCRM-ITE}}[k]$
        \State Use $\tilde{\bm{v}}_{\text{MCRM-ITE}}[k]$ to calculate the updated objective values: $\text{Obj}_{\text{CM}}[k]$, $\text{Obj}_{\text{MCRM}}[k]$ and $\text{Obj}_{\text{MCRM-APPX}}[k]$
        \If {$\text{total variable difference}  \leq \epsilon_1$ \textbf{or} \\ \hspace*{2.1em} $\text{total linearization deviation} \leq \epsilon_2$ \textbf{or} \\
        \hspace*{2.1em} $\text{linearization ratio deviation} \leq \epsilon_3$} 
            \State \textbf{break} 
        \EndIf \Comment{\textit{convergence checked}}
        \If{$v_{i, t}^{[k]} \notin \mathcal{SR}_{i}^{[k]}, \forall i \in \mathcal{N}, t \in \mathcal{T}$}
            \For{each component $i$}
                \State Form $\mathcal{SR}_{i}^{[k]}$
                \State Obtain $\mathcal{V}_{i}^{\text{reg}}$
                \State Fit the regression model of component $i$
                \State Update the model parameter, $\forall t \in \mathcal{T}$
                \State Re-solve MCRM and go to step $8$ 
            \EndFor \Comment{\textit{model parameter updated}}
        \EndIf
    \EndFor
    \State {\textbf{return}} $\tilde{\bm{v}}_{\text{MCRM}}$, $\text{Obj}_{\text{MCRM}}$
\end{algorithmic}
\end{algorithm}

\section{Reliability model covariates selection} \label{appendix:para_values}
Table \ref{tab:para_val} lists all $18$ features in detail, specifying the group to which each feature belongs for every data source $\mathcal{D}$.
\begin{table}
    \centering
        \begin{tabular}{cc}
            \hline
            \textbf{Feature Category} & \textbf{Feature Detail} \\
            \hline
            Heat & $2$m temperature ($K$) \\
                 & $2$m dewpoint temperature ($K$) \\
                 & Surface net solar radiation ($J/m^{2}$) \\
                 & Surface net thermal radiation ($J/m^2$) \\
                 & Surface sensible heat flux ($J/m^2$) \\
            \hline
            Precipitation, & Convective precipitation ($m$) \\
            rain and snow  & Total precipitation ($m$)\\
                 & Convective snowfall ($m$) \\
                 & Snow density ($kg/m^3$) \\
            \hline
            Wind & $10$m u-component of neutral wind ($m/s$) \\
                 & $10$m u-component of wind ($m/s$) \\
                 & $10$m v-component of neutral wind ($m/s$) \\
                 & $10$m v-component of wind ($m/s$) \\
            \hline
            Other & Leaf area index, high vegetation ($m^2/m^2$) \\
                 & Leaf area index, low vegetation ($m^2/m^2$) \\
                 & High vegetation cover \\
                 & Total cloud cover \\
                 & Surface runoff ($m$) \\
            \hline
        \end{tabular}
    \caption{Covariates for the weather data  \cite{copernicus}.}
    \label{tab:para_val}
\end{table}

\section{Partial global prior probability shift} \label{appendix:global_prior_prob_calibration}
The mathematical derivation of this section draws primarily on Section $2.2$ of \cite{saerens2002adjusting}. Our objective is to estimate the real-world conditional failure probability $\text{Pr}_{\text{real}}(y = \omega^j|\bm{x})$ of a component by combining the predictive model trained on our synthetic data and the true prior probabilities $\text{Pr}_{\text{real}}(y = \omega^j)$. We assume that the within class covariate distributions remain unchanged between the synthetic and real life data:
\begin{equation}
    \text{Pr}_{\text{syn}}(\bm{x}|y = \omega^j) = \text{Pr}_{\text{real}}(\bm{x}|y = \omega^j), \forall j = \{-1, 1\}. \label{eq:same_cov_distn}
\end{equation}
By Bayes' theorem and \eqref{eq:same_cov_distn}
\begin{align}
    \text{Pr}_{\text{syn}}(\bm{x}|y = \omega^j) &= \frac{\text{Pr}_{\text{syn}}(y = \omega^j|\bm{x}) \text{Pr}_{\text{syn}}(\bm{x})}{\text{Pr}_{\text{syn}}(y = \omega^j)} \\
    &= \frac{\text{Pr}_{\text{real}}(y = \omega^j|\bm{x}) \text{Pr}_{\text{real}}(\bm{x})}{\text{Pr}_{\text{real}}(y = \omega^j)} \notag \\
    \Rightarrow \text{Pr}_{\text{real}}(y = \omega^j|\bm{x}) &= \frac{\text{Pr}_{\text{syn}}(y = \omega^j|\bm{x})\text{Pr}_{\text{real}}(y = \omega^j)}{\text{Pr}_{\text{syn}}(y = \omega^j)}\frac{\text{Pr}_{\text{syn}}(\bm{x})}{\text{Pr}_{\text{real}}(\bm{x})}\label{eq:bayes_given_x_syn}
\end{align}
In \eqref{eq:bayes_given_x_syn}, we note that $\forall j$ we know the estimate of $\text{Pr}_{\text{syn}}(y = \omega^j|\bm{x})$ $(\hat{\text{P}}\text{r}_{\text{syn}}(y = \omega^j|\bm{x}))$ from the prediction model. $\text{Pr}_{\text{syn}}(y = \omega^j)$ can be directly calculated from our synthetic data and $\text{Pr}_{\text{real}}(y = \omega^j)$ is given. We rewrite \eqref{eq:bayes_given_x_syn} to indicate estimated quantities:
\begin{equation}
    \hat{\text{P}}\text{r}_{\text{real}}(y = \omega^j|\bm{x}) = \frac{\hat{\text{P}}\text{r}_{\text{syn}}(y = \omega^j|\bm{x})\text{Pr}_{\text{real}}(y = \omega^j)}{\text{Pr}_{\text{syn}}(y = \omega^j)}\frac{\text{Pr}_{\text{syn}}(\bm{x})}{\text{Pr}_{\text{real}}(\bm{x})}.\label{eq:bayes_given_x_syn_est}
\end{equation}
By law of total probability $\sum_{j = -1, 1}\hat{\text{P}}\text{r}_{\text{real}}(y = \omega^j|\bm{x}) = 1$ and Bayes' theorem
\begin{align}
     & \left(\frac{\text{Pr}_{\text{syn}}(\bm{x})}{\text{Pr}_{\text{real}}(\bm{x})} \right)^{-1} = \sum_{j = -1, 1} \frac{\text{Pr}_{\text{real}}(\bm{x})}{\text{Pr}_{\text{syn}}(\bm{x})} \hat{\text{P}}\text{r}_{\text{real}}(y = \omega^j|\bm{x}) \notag \\
     = &\sum_{j = -1, 1} \frac{\hat{\text{P}}\text{r}_{\text{real}}(\bm{x}|y = \omega^j)\text{Pr}_{\text{real}}(y = \omega^j)}{\text{Pr}_{\text{real}}(\bm{x})} \frac{\text{Pr}_{\text{real}}(\bm{x})}{\text{Pr}_{\text{syn}}(\bm{x})} \notag \\
     = &\sum_{j = -1, 1} \frac{\hat{\text{P}}\text{r}_{\text{syn}}(\bm{x}|y = \omega^j)\text{Pr}_{\text{real}}(y = \omega^j)}{\text{Pr}_{\text{syn}}(\bm{x})} \notag \\
     = &\sum_{j = -1, 1} \frac{\hat{\text{P}}\text{r}_{\text{syn}}(y = \omega^j|\bm{x})\text{Pr}_{\text{syn}}(\bm{x})}{\text{Pr}_{\text{syn}}(y = \omega^j)} \frac{\text{Pr}_{\text{real}}(y = \omega^j)}{\text{Pr}_{\text{syn}}(\bm{x})} \notag \\
     = &\sum_{j = -1, 1} \hat{\text{P}}\text{r}_{\text{syn}}(y = \omega^j|\bm{x}) \frac{\text{Pr}_{\text{real}}(y = \omega^j)}{\text{Pr}_{\text{syn}}(y = \omega^j)}
     \label{eq:quantity_to_cancel}
\end{align}
By \eqref{eq:bayes_given_x_syn_est} and \eqref{eq:quantity_to_cancel}, we obtain $\forall \diamond \in \{\text{b}, \text{l}\}$
\begin{align*}
    &\hat{\text{P}}\text{r}_{\text{real}, i}(y_i^{\diamond} = \omega_i^{-1}|\bm{x}_i^{\diamond}) \notag \\
    = &\frac{\frac{\hat{\text{P}}\text{r}_{\text{syn}}(y_i^{\diamond} = \omega_i^{-1}|\bm{x}_i^{\diamond})\text{Pr}_{\text{real}}(y^{\diamond} = \omega^{-1})}{\text{Pr}_{\text{syn}}(y^{\diamond} = \omega^{-1})}}{\sum_{j = -1, 1} \hat{\text{P}}\text{r}_{\text{syn}}(y_i^{\diamond} = \omega^{j}|\bm{x}_i^{\diamond}) \frac{\text{Pr}_{\text{real}}(y^{\diamond} = \omega^j)}{\text{Pr}_{\text{syn}}(y^{\diamond} = \omega^j)}} \notag \\
    = &\frac{\hat{\text{P}}\text{r}_i^{\diamond, \text{raw}}\text{Pr}_i^{\diamond, \text{avg}}\frac{n^{\diamond}_{\text{syn}}}{n_{\text{syn,fail}}}}{\hat{\text{P}}\text{r}_i^{\diamond, \text{raw}}\text{Pr}_i^{\diamond, \text{avg}} \frac{n_{\text{syn}}}{n^{\diamond}_{\text{syn,fail}}} + (1 - \hat{\text{P}}\text{r}_i^{\diamond, \text{raw}})(1 - \text{Pr}_i^{\diamond, \text{avg}}) \frac{n_{\text{syn}}}{n_{\text{syn}} - n^{\diamond}_{\text{syn,fail}}}}. \blacksquare
\end{align*}
Global prior calibration uniformly rescales all probabilities, but doing so in the extreme tails is unreliable because these regions contain few validation examples. As a result, the rescaling factor is dominated by the prior and can overwhelm high confidence predictions \cite{niculescu2005predicting}. Literature exists to show formally that calibrating only in a neighborhood of the decision threshold is sufficient for Bayes risk consistency and probabilities that are already close to $0$ or $1$ do not affect decisions or expected loss. Therefore, we limit the prior correction to the central interval s.t. $\mathcal{CI} = [0.05, 0.95]$ \cite{sahoo2021reliable}.

\section{IEEE 33 bus test system layout} \label{appendix:sys_layout}
\begin{enumerate}
    \item \textbf{DGs}: Bus $15$, $16$, $17$, $18$, $21$, $23$, $24$, $26$, $30$
    \item \textbf{PVs}:
        \begin{itemize}
            \item \textit{Low PV case}: Bus $17$, $18$, $23$, $24$
            \item \textit{High PV case}: Bus $15$, $16$, $17$, $18$, $21$, $23$, $24$, $26$, $30$
        \end{itemize}
    \item \textbf{BESSs}: Bus $17$, $18$, $23$, $24$
    \item \textbf{DR}: Bus $2$, $3$, $5$
\end{enumerate}
Figure \ref{fig:33_bus_power_radial_system} shows the system topology and locations of DERs.

\begin{figure}
    \centering
    \resizebox{\linewidth}{!}{  \begin{tikzpicture}[>={Latex[width=2mm,length=2mm]},
                      substation/.style={circle, draw=blue!55, fill = blue!50, text = white, text centered, minimum size=1cm, inner sep=2pt},
                      bus/.style={circle, draw=darkgreen!55, fill = darkgreen!50, text = white, text centered, minimum size=0.75cm, inner sep=2pt},
                      unit/.style={align=center, font=\scriptsize, execute at begin node=\setlength{\baselineskip}{10pt}},
                      scale=0.5, every node/.style={scale=0.5}]

  \node[substation] (bus0) { Sub };
  \node[bus, right=0.4cm of bus0]  (bus1)  {1};
  \node[bus, right=0.4cm of bus1]  (bus2)  {2};
  \node[bus, right=0.4cm of bus2]  (bus3)  {3};
  \node[bus, right=0.4cm of bus3]  (bus4)  {4};
  \node[bus, right=0.4cm of bus4]  (bus5)  {5};
  \node[bus, right=0.4cm of bus5]  (bus6)  {6};
  \node[bus, right=0.6cm of bus6] (bus14) {14};
  \node[bus, right=0.4cm of bus14] (bus15) {15};
  \node[bus, right=0.4cm of bus15] (bus16) {16};
  \node[bus, right=0.4cm of bus16] (bus17) {17};

  \node[bus, above=0.4cm of bus3] (bus22) {22};
  \node[bus, right=0.4cm of bus22] (bus23) {23};
  \node[bus, right=0.4cm of bus23] (bus24) {24};

  \node[bus, below=0.4cm of bus6] (bus25) {25};
  \node[bus, right=0.4cm of bus25] (bus26) {26};
  \node[bus, right=0.6cm of bus26] (bus30) {30};
  \node[bus, right=0.4cm of bus30] (bus31) {31};
  \node[bus, right=0.4cm of bus31] (bus32) {32};

  \node[bus, below=0.8cm of bus2] (bus18) {18};
  \node[bus, right=0.4cm of bus18] (bus19) {19};
  \node[bus, right=0.4cm of bus19] (bus20) {20};
  \node[bus, right=0.4cm of bus20] (bus21) {21};

  \draw[->, pink!86] (bus0) -- (bus1);
  \draw[->, pink!86] (bus1) -- (bus2);
  \draw[->, pink!86] (bus2) -- (bus3);
  \draw[->, pink!86] (bus3) -- (bus4);
  \draw[->, pink!86] (bus4) -- (bus5);
  \draw[->, pink!86] (bus5) -- (bus6);
  \draw[dotted, thick, ->, pink!86] (bus6) -- (bus14);
  \draw[->, pink!86] (bus14) -- (bus15);
  \draw[->, pink!86] (bus15) -- (bus16);
  \draw[->, pink!86] (bus16) -- (bus17);

  \draw[->, pink!86] (bus2) |- (bus22);
  \draw[->, pink!86] (bus22) -- (bus23);
  \draw[->, pink!86] (bus23) -- (bus24);

  \draw[->, pink!86] (bus5) |- (bus25);
  \draw[->, pink!86] (bus25) -- (bus26);
  \draw[dotted, thick, ->, pink!86] (bus26) -- (bus30);
  \draw[->, pink!86] (bus30) -- (bus31);
  \draw[->, pink!86] (bus31) -- (bus32);

  \draw[->, pink!86] (bus1) |- (bus18);
  \draw[->, pink!86] (bus18) -- (bus19);
  \draw[->, pink!86] (bus19) -- (bus20);
  \draw[->, pink!66] (bus20) -- (bus21);

  \node[unit, below=0.01cm of bus2]{\textcolor{purple!90}{DR}};
  \node[unit, below=0.01cm of bus3]{\textcolor{purple!90}{DR}};
  \node[unit, above=0.01cm of bus5]{\textcolor{purple!90}{DR}};
  \node[unit, above=0.01cm of bus15]{\textcolor{purple!90}{DG}\\ \textcolor{purple!60}{PV}};
  \node[unit, above=0.01cm of bus16]{\textcolor{purple!90}{DG}\\ \textcolor{purple!60}{PV}};
  \node[unit, above=0.01cm of bus17]{\textcolor{purple!90}{DG}\\\textcolor{purple!90}{PV}\\\textcolor{purple!90}{Battery}};
  \node[unit, below=0.01cm of bus18]{\textcolor{purple!90}{DG}\\\textcolor{purple!90}{PV}\\\textcolor{purple!90}{Battery}};
  \node[unit, below=0.01cm of bus21]{\textcolor{purple!90}{DG}\\ \textcolor{purple!60}{PV}};
  \node[unit, above=0.01cm of bus23]{\textcolor{purple!90}{DG}\\\textcolor{purple!90}{PV}\\\textcolor{purple!90}{Battery}};
  \node[unit, above=0.01cm of bus24]{\textcolor{purple!90}{DG}\\\textcolor{purple!90}{PV}\\\textcolor{purple!90}{Battery}};
  \node[unit, below=0.01cm of bus26]{\textcolor{purple!90}{DG}\\ \textcolor{purple!60}{PV}};
  \node[unit, below=0.01cm of bus30]{\textcolor{purple!90}{DG}\\ \textcolor{purple!60}{PV}};
\end{tikzpicture}}
    \caption{33-bus power system radial network topology.}
    \label{fig:33_bus_power_radial_system}
\end{figure}
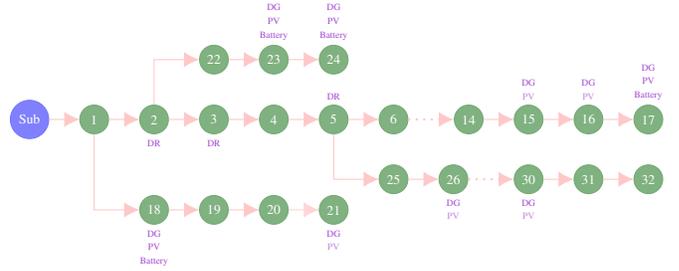
\FloatBarrier

\bibliographystyle{IEEEtran}
\bibliography{IEEE_Reliability/ref}

\end{document}